\def\ps@pprintTitle{%
 \let\@oddhead\@empty
 \let\@evenhead\@empty
 \def\@oddfoot{}%
 \let\@evenfoot\@oddfoot}
\let\Algorithm\algorithm
\renewcommand\algorithm[1][]{\Algorithm[#1]\setstretch{1.2}}
\renewcommand{\fnum@algorithm}{\fname@algorithm}
\pgfplotsset{compat=1.14}
\setlist{leftmargin=2.5mm}
\newtheorem{proposition}{Proposition}[section]
\newtheorem{remark}{Remark}[section]
\newtheorem{example}{Example}[section]
\newtheorem{problem}{Problem}[section]
\newcommand{\Mod}[1]{\ (\mathrm{mod}\ #1)}
\def\Colorato{Color}
\def\Rendered{}
\newcommand{\Includiamolo}[2]{\includestandalone[#1]{Resources/#2}}
\newcommand{\Includiamolo}[2]{\includegraphics[#1]{Resources/render/#2.png}}
\begin{document}
\pagestyle{plain}

\begin{frontmatter}

  \title{Merging Combinatorial Design and Optimization: the Oberwolfach Problem}
  \date{}

  \author[1]{Fabio Salassa}
  \author[2,1]{Gabriele Dragotto \corref{cor1}}
  \author[4]{Tommaso Traetta}
  \author[5]{Marco Buratti}
  \author[1,3]{Federico Della Croce}

  \cortext[cor1]{Corresponding author.}

  \address[1]{\footnotesize  {\tt \{fabio.salassa, federico.dellacroce\}@polito.it } \\ Dipartimento di Ingegneria Gestionale e della Produzione, Politecnico di Torino (Italy)}
     \address[3]{CNR, IEIIT, Torino, Italy}
  \address[2]{\footnotesize  {\tt gabriele.dragotto@polymtl.ca }\\ Canada Excellence Research Chair in Data Science for Real-time Decision-making, \'Ecole Polytechnique de Montréal (Canada)}
  \address[4]{\footnotesize   {\tt tommaso.traetta@unibs.it } \\ DICATAM, Università degli Studi di Brescia (Italy)}
   \address[5]{\footnotesize  {\tt marco.buratti@unipg.it } \\ Dipartimento di Matematica e Informatica, Università degli Studi di Perugia (Italy)}

\begin{abstract}

The Oberwolfach Problem $OP(F)$ -- posed by Gerhard Ringel in 1967 -- is a paradigmatic Combinatorial Design problem asking whether the complete graph $K_v$ decomposes into edge-disjoint copies of a $2$-regular graph $F$ of order $v$. 
In this paper, we provide all the necessary equipment to generate solutions to $OP(F)$ for relatively small orders by using the so-called difference methods. From the theoretical standpoint, we present new insights on the combinatorial structures involved in the solution of the problem. Computationally, we provide a full recipe whose base ingredients are advanced optimization models and tailored algorithms. This algorithmic arsenal can solve the $OP(F)$ for all possible orders up to $60$ with the modest computing resources of a personal computer. The new $20$ orders, from $41$ to $60$, encompass $241200$ instances of the Oberwolfach Problem, which is 22 times greater than those solved in previous contributions.
\end{abstract}  
  
\begin{keyword}\footnotesize  Combinatorics \sep Graph Factorization \sep Integer Programming \sep Combinatorial Design\sep Oberwolfach Problem  \sep Constraint Programming
\end{keyword}
\end{frontmatter}
\newcommand\OneRot{$1$-{\em rotational}\xspace}
\newcommand\TwoRot{$2$-{\em rotational}\xspace}
\newcommand\BLPName{Binary Labeling Problem}
\newcommand\BLP{(BLP)\xspace}
\newcommand\BLPNameEven{Even Binary Labeling Problem}
\newcommand\BLPEven{(eBLP)\xspace}
\newcommand\GLPName{Group Labeling Problem}
\newcommand\GLP{(GLP)\xspace}
\newcommand\GLPNameEven{Even Group Labeling Problem}
\newcommand\GLPEven{(eGLP)\xspace}
\newcommand\FStar{($F^*$ LP)}
\newcommand\FStarName{$F^*$ Labeling}
\newcommand\Prop{\textit{Proposition}\;}
\newcommand\Rem{\textit{Remark}\;}

\section{Introduction}
\label{sec:intro}

Gerhard Ringel proposed the Oberwolfach Problem ($OP$) for the first time in $1967$ \citep{Guy1971}, while attending a conference at the Mathematical Research Institute of Oberwolfach, in Germany.
In conferences held at the Institute, participants usually dine together in a room with circular tables of different sizes, and each participant has an assigned seat. Ringel asked whether there exists a seating arrangement for an odd number $v$ of people and $(v-1)/2$ meals so that all pairs of participants are seated next to each other exactly once. 

Formally, given a spanning $2$-regular subgraph (a \textit{$2$-factor}) $F$ of $K_v$ 
(the \textit{complete graph} of $v$ vertices), the Oberwolfach problem $OP(F)$ asks whether $K_v$ with $v$ odd decomposes into $(v-1)/2$ edge-disjoint copies of $F$. We describe $F$ as $[\dots,^{m_i}c_i, \dots]$ -- a list of cycles with their multiplicity -- where each $m_i$ is the multiplicity of $c_i$-length cycles in $F$. For the sake of simplicity, we omit any $c_i$ for which $m_i=0$, and we remove the superscript when $m_i=1$ for a given $i$.

In 1979, \citet{Huang1979} extended the problem to the case where $v$ is even. Although $OP$ has drawn interest, and much progress has been made over the past few years (see, for instance, \cite{Bryant2011, Bryant2009, Buratti2008, Asymptotic, Hilton2001, Ollis2009, TRAETTA2013984}), a complete solution has yet to be found. 
A survey of the most relevant results on this problem, updated to 2006, can be found in \cite{Colbourn1996}.

Solutions to $OP$ can often be found by focusing on those having symmetries with a particular action on the vertex set. By knowing the structure of these solutions, the problem of finding edge-disjoint $2$-factors turns into finding a few well-structured $2$-factors.
The so-called \textit{difference methods} -- a family of algebraic tools -- set the rules to construct such well-structured $2$-factors.
\textit{Difference methods} were introduced for the first time by \citet{Anstice1852} to generalize solutions to Kirkman's 15 schoolgirls problem, one of the paradigmatic problems in Combinatorial Design Theory. Arranging seats around tables is not new for Operations Research as well. 
\citet{Garcia2014}, for instance, introduced a table placement problem aiming to maximize a measure of social benefit. 

The baseline of this work is the contribution of \citet{Deza2010}. There, authors solved $OP$ for $18\leq v \leq 40$, modeling difference methods with undisclosed algorithms and tests which were carried out on a high-performance computing cluster \citep{SHARCNET}. 

In this paper, we provide all the necessary equipment to generate solutions to an Oberwolfach problem of a relatively small order; not only on the theoretical side but, most importantly and differently from \citet{Deza2010}, also on the computational side. Indeed we give a full recipe whose base ingredients are advanced optimization algorithms rather than an exhaustive search. These algorithms allowed us to rapidly obtain the desired solutions for all possible orders $v\le60$ with a personal computer which, we point out, is a much lower performance threshold compared to a high-performance computing cluster of \citet{Deza2010} in 2008. We also recall that the number of generated solutions increases with the order of the problem, for instance, from $1756$ partitions of $v=40$ to $33552$ partitions of $v=60$.  

Our approach blends combinatorial design theory with optimization and computation paradigms. We model difference methods as Constraint Programming ($CP$) problems, and leverage on state-of-the-art algorithms to find the combinatorial solutions.
We were able to generate complete solutions for $OP$ when $v \le 60$. Our approach solves a generic instance within 5 seconds on a standard computer, compared to the need for a high-performance computing cluster \citep{Deza2010}.
The extensive computational testing allowed us to derive new theoretical results for $OP$, in particular, a new necessary condition was detected on the existence of \OneRot solutions. Moreover, an Integer Programming ($IP$) model verifies the non-existence of solutions for $OP(^23,5)$.

In a nutshell, the above optimization tools enabled us to solve large $OP$ instances in limited CPU times and at the same time to derive theoretical results for general classes of instances. We believe such an approach could be generalized to a broader class of Combinatorial Design problems.

The paper proceeds as follows. In Section $2$, since this work is at the intersection of two distinct domains, Combinatorial Design Theory and Combinatorial Optimization, we introduce a standard tool pertaining to the former (difference methods) by means of an illustrative example. Section $3$ presents how to construct well-structured $2$-factors for the solution of  $OP(F)$. Section $4$ shows how to translate into $CP$ models the findings of section $3$. Section $5$ provides the outcome of the experimental testing. Section $6$ concludes the paper with final remarks.

\newcommand{\e}[2]{\lfloor #1, #2\rfloor}

\section{Difference methods and \texorpdfstring{$OP(F)$}{OP(F)}: an illustrative example}
\label{sec:example}

\textit{Difference methods} exploit the symmetries of a $2$-factorization and tell us how to construct one well-structured $2$-factor which yields, by \textit{translation}, the complete set of $2$-factors giving a solution to $OP(F)$. To explain it in the context of the Ringel's informal formulation, we can construct, for instance, the first meal seating arrangement (the desired well-structured $2$-factor) and \textit{derive/translate} from it the remaining ones.
In the following example, we provide a well-structured $2$-factor for $OP(3,6)$, and show how the remaining meals can be \textit{derived} starting from it.

\begin{figure}[!ht]
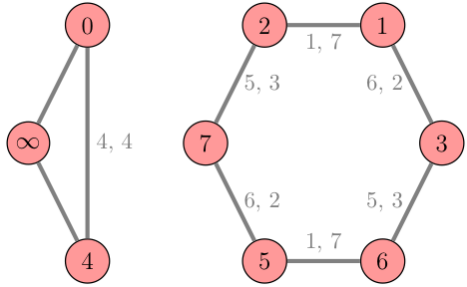

\centering
 \Includiamolo{width=0.5\textwidth}{OP_DifferenceFamilies\Colorato}
      \caption{A first meal arrangement for $OP(F=[3,6])$. }
    \label{img:DifferenceMethods}
\end{figure}

Figure \ref{img:DifferenceMethods} depicts the seating arrangement of the first meal  (see Section \ref{sec:1rot}, \Prop \ref{prop:1rot}) of $OP(F)$, where $F$ contains two cycles (dinner tables) of size 3 and 6, thus $F=[3,6]$ and $v=9$.

We label the vertices of $F$ with the elements of  $\mathbb{Z}_8\ \cup\ \{\infty\}$ and
for each edge incident with two vertices different from $\infty$, say $i$ and $j$, 
the two differences $i - j$ and $j - i$ (both $\mod (v - 1)$) have to be calculated.
For instance, if we consider the vertices labeled 2 and 1 in Figure \ref{img:DifferenceMethods}, 
we obtain the differences 1 and $-1\equiv 7 \pmod{8}$.
The list $\Delta F$ of all possible differences between adjacent vertices of $F$, different from $\infty$, contains every element in $\{1,2,\ldots, 7\}$ with multiplicity $2$. 
Furthermore, $F+4 = F$, where $F+4$ is the graph obtained from $F$ by adding 4 to every vertex but $\infty$.
 In other words, we have found a vertex labeling of $F$ such that $\Delta F$ contains every element in $\{1,2,\ldots, 7\}$ with multiplicity $2$, and $F+4=F$. These are the two crucial conditions which guarantee that $F$ is the sought-after $2$-factor (see \Prop \ref{prop:1rot}) which will generate a solution to $OP(F)$.
Indeed, the set $\mathcal{F} = \{F, F+1, F+2, F+3\}$ of all distinct translates of $F$ 
(see Figure \ref{img:DifferenceMethods2}) gives us a set of edge-disjoint copies of $F$ which decompose $K_9$, that is, a solution to $OP(F)$.

\begin{figure}[!ht]
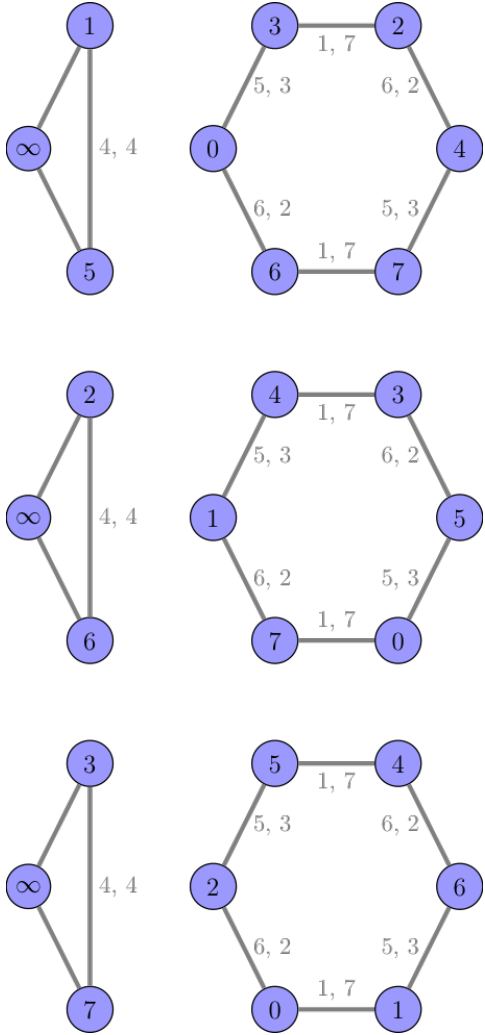

\centering
   \Includiamolo{width=0.5\textwidth}{OP_DifferenceFamilies2\Colorato}
      \caption{The remaining meals arrangements for $OP(F=[3,6])$. }
    \label{img:DifferenceMethods2}
\end{figure}

In the following section we provide conditions to find a well-structured $2$-factor $F$ which guarantee that all its distinct translates yield a solution to  OP$(F)$. 
In Section \ref{sec:solOP}, these conditions are then reformulated as $CP$ models to be tackled by a solver to generate solutions (i.e. first meal arrangements).

\section{Constructions of well-structured \texorpdfstring{$2$-factors}{2-factors}}
\label{sec:ober}

A graph has a 2-factorization if and only if it is regular and of even degree, as \citet{Petersen1891} shows.
However, given a particular 2-factor $F$, if we ask for a $2$-factorization whose factors are all isomorphic to $F$, then the problem becomes much harder.
Our focus is on constructing 2-factorizations of $K_v^*$ which is the complete graph $K_v$ of order $v$ when $v$ is odd, or $K_v-I$, i.e. the complete graph $K_v$ minus the 1-factor $I$, when $v$ is even. 
Given a $2$-factor $F$ of order $v$, the Oberwolfach problem on $F$ ($OP(F)$) asks for a $2$-factorization of $K_v^*$ into copies of $F$. 

A solution to $OP(F)$ exists whenever the order of $F$ is less than $40$ \citep{Deza2010}, except only when
$F \in \{[\;^2 3], [^4 3]$, $[4, 5], [^2 3 , 5]\}$. These are the only known cases in which the problem is not solvable. We point out that even though  \citet{Piotrowski1991} self-cites for a computer-based proof of the non-existence of a solution to $OP(^23,5)$, no published proof exists. 
$OP(F)$ has also been solved when $F$ is a uniform $2$-factor (i.e., $F$ consists of cycles of the same length) \citep{Alspach1985, Alspach1989, Hoffman1991}, when $F$ is bipartite (i.e., $F$ contains only cycles of even lengths) \citep{Alspach1985, Bryant2011}, when $F$ has exactly two cycles \citep{TRAETTA2013984}, or for an infinite family of prime orders \citep{Bryant2009}. In addition, \citep{Buratti2005, Buratti2008, Buratti2012} studied solutions having symmetries with a prescribed action on the vertices, 
and existence for sufficiently large $v$ can be found in \citep{Asymptotic}. Furthermore, a solution to the infinite variant of the Oberwolfach problem can be found in \citep{Costa_Complete_2020}.
However, the problem is still open in general, and \citep[Section VI.12]{Colbourn1996} provides a detailed survey on this subject, updated to 2006. 
\subsection{\texorpdfstring{\OneRot}{1-rotational} solutions to \texorpdfstring{$OP(F)$}{OP(F)}}
\label{sec:oberdiff}
\label{sec:1rot}\
\citet{Buratti2008} construct \OneRot solutions in the odd case, that is, $2$-factorizations of $K_v$, with $v$ odd, with a well-behaved automorphism group. 
Let $v=2n+1$ be a positive integer, let $\mathbb{Z}_{2n}$ denote the group of integers modulo $2n$, and set 
$V = \mathbb{Z}_{2n}\ \cup\ \{\infty\}$.
The list of differences of a subgraph $\Gamma$ of $K_V$ is the multiset $\Delta \Gamma$ of all possible differences 
between pairs of adjacent vertices of $\Gamma$, excluding the vertex $\infty$, namely:
\begin{eqnarray}
  \Delta \Gamma = \big\{ x-y \;\big|\;
  \e{x}{y}\in E(\Gamma\setminus\{\infty\})\big\}.
  \label{eq:1RotDiff}
\end{eqnarray}
We use the notation $\lfloor x,y \rfloor$ to denote an edge between the nodes $x$ and $y$.
Also, for every $g\in \mathbb{Z}_{2n}$, we denote by $\Gamma+g$ the graph with vertex set $V$ obtained from $\Gamma$ by replacing each vertex $x\neq \infty$ with $x+g$. 

The following result, proven in \citep{Buratti2008}, provides conditions which guarantee the existence of a solution to 
$OP(F)$. 
\begin{proposition}\label{prop:1rot}
Let $F$ be a $2$-regular graph satisfying the following properties:
\begin{enumerate}[leftmargin=15pt]
  \item $V(F) = \mathbb{Z}_{2n}\ \cup\ \{\infty\}$,
  \item $\Delta F \supset \mathbb{Z}_{2n}\setminus\{0\}$,
  \item $F+n=F$.
\end{enumerate}
Then $\mathcal{F}=\{F+g\mid g\in \mathbb{Z}_{2n}\}$ is a solution of $OP(F)$.
\end{proposition}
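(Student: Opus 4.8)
The plan is to show that the (at most $n$) distinct translates of $F$ partition the edge set of $K_v=K_{2n+1}$ by tracking edges through the action of $\mathbb{Z}_{2n}$ by translation, with $\infty$ fixed. First I would record the two background counts that drive everything: $K_v$ has $n(2n+1)$ edges, while each copy of $F$, being $2$-regular on $2n+1$ vertices, has exactly $2n+1$ edges, of which precisely two are incident with $\infty$ and $2n-1$ are \emph{finite} (both endpoints in $\mathbb{Z}_{2n}$); hence $|\Delta F|=2(2n-1)=4n-2$. Since condition~3 gives $F+n=F$, translation by $n$ repeats the list $\{F+g\}$ with period $n$, so $\mathcal{F}$ has at most $n$ members, and $n$ pairwise edge-disjoint copies of $F$ would carry exactly $n(2n+1)=|E(K_v)|$ edges. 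Thus it suffices to prove that every edge of $K_v$ lies in at least one translate, since the edge count then forces the covering to be exact (equivalently, edge-disjoint).

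The key preliminary step is to pin down the difference multiplicities of $F$ using conditions~2 and~3 \emph{together}. I would classify the finite edges by their difference pair $\{d,-d\}$: an edge $\{x,y\}$ contributes $x-y$ and $y-x$ to $\Delta F$, so a difference $d\neq n$ acquires multiplicity equal to the number $e_d$ of finite edges with that pair, while $d=n$ (self-paired) acquires multiplicity $2e_n$. Condition~2 gives $e_d\geq 1$ for each $d$. The leverage comes from condition~3: the involution $x\mapsto x+n$ preserves $F$ and preserves differences, and its only fixed finite edges are those of difference $n$; hence it acts freely on the difference-$d$ edges for $d\neq n$, forcing $e_d$ to be even, so $e_d\geq 2$. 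Feeding $\sum_{d=1}^{n-1}e_d+e_n=2n-1$ with $e_d\geq 2$ and $e_n\geq 1$ squeezes out the unique solution $e_d=2$ for $1\leq d\leq n-1$ and $e_n=1$: every nonzero difference occurs in $\Delta F$ exactly twice. The same fixed-point analysis at $\infty$ shows its two neighbours are $a$ and $a+n$, so the two $\infty$-edges also form a translation-by-$n$ pair.

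With this structure in hand, the covering argument proceeds orbit by orbit under the $\mathbb{Z}_{2n}$-action, which splits $E(K_v)$ into one orbit of $2n$ edges through $\infty$, one orbit of $n$ edges of difference $n$, and $n-1$ orbits of size $2n$, one per difference pair $\{d,-d\}$ with $1\leq d\leq n-1$. For each orbit I would exhibit the base edges of $F$ lying in it and translate them by $g=0,1,\dots,n-1$: the two $\infty$-edges $\{\infty,a\},\{\infty,a+n\}$ sweep out all $2n$ edges $\{\infty,a+g\}$; the single difference-$n$ edge sweeps its whole $n$-edge orbit; and the two difference-$d$ edges, being a translation-by-$n$ pair, sweep all $2n$ edges of their orbit (here $2d\neq 0$ guarantees the orbit genuinely has size $2n$). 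Summing, the $n$ translates present $n(2n+1)$ (edge,\,$g$) slots that map bijectively onto the $n(2n+1)$ edges of $K_v$, proving $\mathcal{F}$ is a $2$-factorization of $K_v$ into copies of $F$.

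The main obstacle -- and the only place the hypotheses do real work -- is the multiplicity computation in the second paragraph: one must resist assuming that condition~2 already forces each difference to appear exactly twice (it only forces ``at least once''), and instead combine it with the parity constraint extracted from condition~3 to eliminate the slack. Everything else is bookkeeping of orbit sizes, for which the identity $|\Delta F|=4n-2=2\,|\mathbb{Z}_{2n}\setminus\{0\}|$ serves as the useful sanity check.
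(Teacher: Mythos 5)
The paper itself contains no proof of Proposition~\ref{prop:1rot}: it is imported as a known result, with the proof deferred to \citep{Buratti2008}. So your proposal can only be compared against the standard difference-method argument, and by that measure it is correct and complete. The reduction of exactness to covering via the edge count $n(2n+1)$, the parity argument showing that translation by $n$ acts freely on finite edges of difference $d\neq n$ (hence $e_d$ is even, hence $e_d=2$ by the squeeze $\sum_{d=1}^{n-1}e_d+e_n=2n-1$ with $e_n\geq 1$), and the orbit-by-orbit sweep are all sound, including the delicate points: the orbit of a difference-$d$ edge has size $2n$ precisely because $2d\neq 0$, the difference-$n$ orbit has size $n$, and the final slot-counting does force the $n$ translates to be pairwise edge-disjoint and distinct. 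One remark on economy, however: the multiplicity computation that you single out as ``the only place the hypotheses do real work'' is actually dispensable. Your first paragraph already reduces everything to showing each edge lies in \emph{some} translate, and covering needs only existence, not exact multiplicity: an edge at $\infty$ is a translate of either edge of $F$ at $\infty$, and a finite edge of difference $d$ is a translate of any single edge of $F$ realizing $d$, which condition~2 alone provides -- as long as one allows translations by arbitrary $g\in\mathbb{Z}_{2n}$ rather than only $g\in\{0,\dots,n-1\}$. The counting then forces exactness exactly as you argue. Your exact-multiplicity analysis is needed only because you chose to sweep each orbit with the restricted range of translations; it is not wasted work, though, since it recovers as a corollary that the hypotheses force $\Delta F$ to contain every nonzero element exactly twice, that the neighbourhood of $\infty$ is a pair $\{a,a+n\}$, and that $F$ has a unique edge of difference $n$ -- the structural facts the example in Section~\ref{sec:example} of the paper highlights.
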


A factorization $\mathcal{F}$ of $K_{2n+1}$ constructed as in Proposition \ref{prop:1rot} is called \OneRot, since 
the permutation group $G=\{\tau_g \mid g\in \mathbb{Z}_{2n}\}$ of $V$, where $\tau_g$ fixes $\infty$ and maps 
$x\in \mathbb{Z}_{2n}$ to $x + g$, is an automorphism group of $\mathcal{F}$ whose action on $V\setminus\{\infty\}$ creates only one orbit.


In \citep[Proposition 2.5]{Buratti2015} it is shown that if there exists 
a $2$-regular graph $F=[\ell_1, \ell_2 \ldots,\; \ell_s]$ of order $2n+1$ 
satisfying the assumptions of 
Proposition \ref{prop:1rot}, then there exists a solution to 
$OP([\ell_1+1, \ell_2 \ldots,\; \ell_s])$ where $\ell_1$ is the length of the cycle of 
$F$ through $\infty$. The following result weakens this condition by showing that 
$\ell_1$ can be the length of any cycle of $F$ provided that it contains a suitable 
difference.

\begin{proposition}\label{prop:2pyr} 
Let $F=[\ell_1, \ell_2 \ldots,\; \ell_s]$ be a $2$-regular graph of order $2n+1$ satisfying the assumptions of Proposition \ref{prop:1rot}.
If $C$ is an $\ell_1$-cycle of $F$ such that $\Delta C$ contains an element of $\mathbb{Z}_{2n}$ of order $2 \pmod{4}$,
then there exists a solution
to $OP([\ell_1+1, \ell_2 \ldots,\; \ell_s])$.
\end{proposition}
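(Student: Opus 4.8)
The plan is to bootstrap from the solution $\mathcal F=\{F+g\mid g\in\mathbb Z_{2n}\}$ of $OP(F)$ given by Proposition~\ref{prop:1rot} (which consists of $n$ distinct factors, since $F+n=F$), adjoin a new vertex $\infty_2$, and manufacture a solution of $OP([\ell_1+1,\ell_2,\dots,\ell_s])$ on $K_{2n+2}-I$. Write $\infty_1$ for the original fixed vertex $\infty$. The elementary move is: in a copy $F+g$, delete an edge $\{u,w\}$ lying on an $\ell_1$-cycle and replace it by the path $\{u,\infty_2\},\{\infty_2,w\}$; this lengthens exactly that one cycle to $\ell_1+1$ and leaves all other cycles intact. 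Because $\mathcal F$ already decomposes $K_{2n+1}$, the $n$ modified factors will decompose $K_{2n+2}-I$ with $I=\{\infty_1,\infty_2\}\cup M$ \emph{precisely when} the $n$ deleted edges (one per factor) form a perfect matching $M$ of $\mathbb Z_{2n}$: the new edges at $\infty_2$ are then the $\{\infty_2,x\}$ for the $2n$ endpoints of $M$, each used once, and what has been removed from the original decomposition is exactly $M$. Thus everything reduces to choosing, in each factor, one edge of difference $d$ sitting on an $\ell_1$-cycle, so that the $n$ chosen edges form a perfect matching.

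To set up the selection I would record the arithmetic forced by the hypothesis. Put $m=\mathrm{ord}(d)$; since $m\equiv 2\pmod 4$ we may write $m=2k$ with $k$ odd, and then $kd$ is the unique involution of $\mathbb Z_{2n}$, namely $kd=n$. The difference-$d$ edges partition $\mathbb Z_{2n}$ into $\gcd(d,2n)=2n/m$ cycles of even length $m$, each of the shape $O:\ x_0,x_0+d,\dots,x_0+(m-1)d$; label its edges $e^{(j)}=\{x_0+jd,\;x_0+(j+1)d\}$. Since $\mathcal F$ is a $2$-factorization of $K_{2n+1}$, each nonzero difference occurs exactly twice in $\Delta F$; in particular $F$ has exactly two edges of difference $d$, and $F+n=F$ interchanges them. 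Hence in the factor $F+g$ the two difference-$d$ edges have base points differing by $n=kd$, so inside $O$ they are the pair $e^{(j)}$ and $e^{(j+k)}$, both lying on $\ell_1$-cycles of $F+g$ (on $C+g$ and on its translate $C+n+g$). A perfect matching of $\mathbb Z_{2n}$ built from difference-$d$ edges is obtained by retaining the alternate edges of each $O$, say all $e^{(j)}$ with $j$ even.

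The heart of the matter---and the only point where ``$\bmod\,4$'' is used---is to verify that this alternating matching keeps exactly one edge from each factor. Two edges $e^{(j)},e^{(j')}$ of a common cycle $O$ belong to the same factor iff their base points differ by a multiple of $n$, i.e. iff $k\mid(j-j')$; so the two difference-$d$ edges of a single factor form the pair $\{e^{(j)},e^{(j+k)}\}$. Since $k$ is odd, $j$ and $j+k$ have opposite parity, so exactly one of $e^{(j)},e^{(j+k)}$ has even index and survives in $M$. Consequently the even-index matching selects precisely one difference-$d$ edge from every factor, each on an $\ell_1$-cycle, as required. I expect this parity bookkeeping to be the crux: if $k$ were even---that is, if $d$ had order $\equiv 0\pmod 4$---then $j$ and $j+k$ would share a parity and the alternating matching would take both or neither edge of some factor-pair, obstructing any attempt to pick one difference-$d$ edge per factor that still matches $\mathbb Z_{2n}$. (When $\mathrm{ord}(d)=2$ one has $d=n$ and $M$ is simply the set of difference-$n$ edges, a degenerate but immediate instance of the same recipe.)

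It then remains to assemble the pieces: set $I=\{\infty_1,\infty_2\}\cup M$ and, in each factor, insert $\infty_2$ into its selected difference-$d$ edge, promoting one $\ell_1$-cycle to an $(\ell_1+1)$-cycle while preserving the other cycle lengths, so that every factor is a copy of $[\ell_1+1,\ell_2,\dots,\ell_s]$. The check that these $n$ factors tile $K_{2n+2}-I$ is then routine counting: the edges incident with $\infty_2$ run once through each vertex of $\mathbb Z_{2n}$; the edges inside $\mathbb Z_{2n}\cup\{\infty_1\}$ are those of $\mathcal F$ with $M$ deleted; and $\{\infty_1,\infty_2\}$ together with $M$ is exactly the removed $1$-factor $I$. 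This produces the desired solution of $OP([\ell_1+1,\ell_2,\dots,\ell_s])$.
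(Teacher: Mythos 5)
Your proof is correct and takes essentially the same route as the paper's: both bootstrap from the $n$ distinct translates of $F$, select one difference-$d$ edge per factor (necessarily lying on an $\ell_1$-cycle, being a translate of the edge of $C$ or of $C+n$) so that the selected edges form a perfect matching $M$ of $\mathbb{Z}_{2n}$, insert the new vertex $\infty'$ into each selected edge, and remove the $1$-factor $I=\{\lfloor \infty,\infty'\rfloor\}\cup M$. Your alternating/parity construction of $M$ (even-index edges in each Cayley-graph cycle, with each factor's pair $e^{(j)},e^{(j+k)}$ split by parity because $k$ is odd) is precisely the paper's matching $\{\lfloor g, x+g\rfloor \mid g\in\mathcal{G}\}$, where $\mathcal{G}$ is a union of cosets of $\langle 2x\rangle$ and the condition $u\equiv 2\pmod 4$ enters through $u/2$ being odd -- the same use of the hypothesis in different clothing.
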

\begin{proof} Let $C=(c_1, c_2,\ldots, c_{\ell_1})$ be the $\ell_1$-cycle of $F$ whose list of differences contains, by assumption, an element $x$ of order $u\equiv 2 \pmod{4}$.
Without loss of generality, we can assume that $x=c_1 -c_{2}$. 

Let $\mathcal{G}=\big\{2x\cdot i + j\mid i\in\{0,1, \ldots, u/2-1\}, j\in\{0,1,\ldots,  2n/u-1\}\big\}$ and
recall that, by the definition of order,  $u$ is the smallest positive integer such that $xu \equiv 0 \pmod{2n}$; hence $xu/2 \equiv n \pmod{2n}$. Therefore, it is not difficult to check that
\begin{equation}\label{partitions}
  \text{both $\{\mathcal{G}, \mathcal{G}+x\}$ and $\{\mathcal{G}, \mathcal{G}+n\}$ are partitions of $\mathbb{Z}_{2n}$}.
\end{equation}
Considering that $F$ satisfies the assumptions of Proposition \ref{prop:1rot}, we have that
$\mathcal{F}=\{F+g\mid g\in \mathbb{Z}_{2n}\}$ is a solution of $OP(F)$. 
By taking into account \eqref{partitions} and recalling that $F+n = F$, it follows that $\mathcal{F}=\{F+g\mid g\in \mathcal{G}\}$.

Now set $C' =(c_1, \infty', c_2, \ldots, c_{\ell_1})$, where $\infty'\not\in \mathbb{Z}_{2n}\cup\{\infty\}$, and let $F'$ be the $2$-regular graph of order $2n+1$ obtained from $F$ by replacing $C$ with $C'$. 
Finally, set $\mathcal{F'}=\{F'+g \mid g\in \mathcal{G}\}$, where 
$F'+g$ is the graph obtained from $F'$ by replacing each vertex $v\not\in \{\infty, \infty'\}$ with $v+g$,
and set $V=\mathbb{Z}_{2n} \cup \{\infty, \infty'\}$.

In order to prove that $OP(F')$ has a solution, we will show that 
$\mathcal{F'}$ is a $2$-factorization of $K^*_{2n+2} = K_{V}-I$, 
where $I=\{\lfloor\infty, \infty'\rfloor\}\cup \{\lfloor g, x+g\rfloor\mid g\in\mathcal{G}\}$.
Note that \eqref{partitions} guarantees that $I$ is a $1$-factor of $K_{V}$.
Also, since $F'$ contains all the edges of $F$ except only for $\lfloor c_1, c_2\rfloor$, and considering that
\[
\{\lfloor c_2+g, c_1+g\rfloor\mid g\in\mathcal{G}\} = \{\lfloor g, x+g\rfloor\mid g\in\mathcal{G}\} \subset I,
\] 
it follows that
every edge of $K_{V}-I$ of the form $\lfloor a,b\rfloor$ with $a\neq \infty' \neq b$ belongs to exactly one cycle of 
$\mathcal{F'}$. Finally, again by \eqref{partitions} we have that 
$\big\{\lfloor \infty', c_1+g\rfloor, \lfloor \infty', c_2+g \rfloor \mid g\in\mathcal{G}\big\} = 
 \big\{\lfloor \infty', b\rfloor \mid b\in \mathbb{Z}_{2n}\big\}$, therefore  
every edge of $K^*_{2n+2}$ of the form $\lfloor \infty', b \rfloor$  with $b\in \mathbb{Z}_{2n}$ belongs to exactly
one cycle of $\mathcal{F'}$. Hence, 
$\mathcal{F'}$ is a $2$-factorization of $K^*_{2n+2}$ and this completes the proof. 
\end{proof}

We now provide two necessary conditions for the existence of a $2$-regular graph satisfying the assumptions of 
Proposition \ref{prop:1rot}.
\begin{proposition} \label{prop:1Rotnecessary1}
  If $F=[\;^{a_1}\ell_1, \;^{a_2} \ell_2, \ldots,\;^{a_s} \ell_s]$ is a $2$-regular graph of odd order $2n+1$ satisfying the assumptions of Proposition \ref{prop:1rot}, then
  \begin{equation}\label{eq:1Rotnecessary1}
    |\{i \mid a_i \ell_i\; \text{is odd}\}| = 1.
  \end{equation}
\end{proposition}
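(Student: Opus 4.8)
The plan is to extract combinatorial information from the third assumption of Proposition~\ref{prop:1rot}, namely $F+n=F$. In the language of the automorphism $\tau_g$ introduced right after Proposition~\ref{prop:1rot}, this condition says precisely that $\tau_n$ is an automorphism of $F$. Since $2n\equiv 0 \pmod{2n}$, the map $\tau_n$ is an involution, and since $n\neq 0$ it has no fixed point on $\mathbb{Z}_{2n}$; its unique fixed vertex is $\infty$. The whole argument will rest on analysing how this fixed-point-structured involution acts on the cycles of $F$, and on counting cycles of each length modulo $2$.

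Being an automorphism, $\tau_n$ permutes the cycles of $F$, splitting them into cycles it fixes setwise and pairs of cycles it interchanges. Interchanged pairs contribute an even amount to the multiplicity of any given length, so only the fixed cycles matter for parity. First I would single out the unique cycle $C_0$ containing $\infty$: its image under $\tau_n$ contains $\tau_n(\infty)=\infty$, so $C_0$ is necessarily fixed. On $C_0$ the involution $\tau_n$ fixes $\infty$ and acts freely on the remaining $\ell_0-1$ vertices lying in $\mathbb{Z}_{2n}$, pairing them two by two; hence $\ell_0-1$ is even and $\ell_0$ is \emph{odd}. For any other fixed cycle $C\subseteq\mathbb{Z}_{2n}$, the involution $\tau_n$ acts with no fixed point at all, so it pairs up all $|C|$ of its vertices and $|C|$ is \emph{even}.

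Putting this together length by length, for each $\ell_i$ the multiplicity $a_i$ equals the number of fixed cycles of length $\ell_i$ plus twice the number of interchanged pairs of that length, so $a_i$ has the same parity as the number of fixed cycles of length $\ell_i$. If $\ell_i$ is even then $a_i\ell_i$ is automatically even. If $\ell_i$ is odd, the previous paragraph shows that the only fixed cycle that can have odd length is $C_0$; thus there is exactly one fixed cycle of length $\ell_i$ when $\ell_i=\ell_0$ (making $a_i$ odd, hence $a_i\ell_i$ odd) and none otherwise (making $a_i$, and so $a_i\ell_i$, even). Consequently the single index realising an odd $a_i\ell_i$ is the one corresponding to the length $\ell_0$ of the cycle through $\infty$, which gives~\eqref{eq:1Rotnecessary1}.

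The step I expect to be the crux is the parity dichotomy for fixed cycles: one has to argue cleanly that a fixed-point-free involution on a cycle forces even length, whereas the presence of the single fixed vertex $\infty$ forces the $\infty$-cycle to have odd length. This is where the specific feature of $\tau_n$ — order $2$ together with $\infty$ as its only fixed point on $V$ — is genuinely used, and it is what ultimately pins the count down to exactly $1$. A minor point to keep in mind is that even-length cycles may well occur with odd multiplicity, but such lengths never contribute to the count because $a_i\ell_i$ is then even anyway.
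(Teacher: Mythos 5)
Your proof is correct, and its skeleton is the same as the paper's: translation by $n$ permutes the cycles of $F$, interchanged pairs contribute evenly to every multiplicity, so everything hinges on the parity dichotomy for the cycles fixed by $+n$ --- the cycle through $\infty$ must have odd length, every other fixed cycle even length --- after which the count in \eqref{eq:1Rotnecessary1} is immediate. The difference is where that dichotomy comes from. The paper does not prove it: it imports both facts from \cite[Proposition 3.4]{Buratti2008} and only carries out the final pairing argument. You instead derive the dichotomy from first principles, observing that $\tau_n$ is an involution whose unique fixed point in $V$ is $\infty$, so that it acts as a fixed-point-free involution on $V(C)\setminus\{\infty\}$ when $C$ is the cycle through $\infty$ (forcing odd length) and on all of $V(C)$ when $C$ is any other fixed cycle (forcing even length). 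This orbit-counting step is elementary and correct, so your argument is self-contained where the paper's is not; what the paper's citation buys is consistency with the finer structural classification of fixed cycles (forms $(a)$--$(c)$ in Remark \ref{rotatedcycles}) that it needs anyway for Proposition \ref{prop:1Rotnecessary2}, whereas your involution argument yields only the parities --- which is all this proposition requires.
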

\begin{proof}
In \cite[Proposition 3.4]{Buratti2008}, the authors show that
\begin{enumerate}[leftmargin=15pt]
  \item the cycle of $F$ passing through $\infty$ has odd length, and
  \item if $C$ is any other cycle of $F$ such that $C + n = C$, then $C$ has even length.

\end{enumerate}
Therefore, if $C$ is an odd length cycle of $F$ not passing through $\infty$, then $C\neq C+n \in F$. In other words, if $a_i$ denotes the number of cycles of $F$ of odd length $\ell_i$, then $a_i$ is even, unless $\ell_i$ is the length of the cycle through $\infty$ and the assertion follows.
\end{proof}

\begin{remark} \label{rotatedcycles} Let $C$ be a cycle with vertices in $V=\mathbb{Z}_{2n}\cup\{\infty\}$ such that
$C= C+n$. It is not difficult to check that $C$ has one of the following forms:
\begin{enumerate}[leftmargin=15pt]
  \item[$(a)$] $C=(\infty, c_1, \ldots, c_{m-1}, c_{m}, c_{m}+n, c_{m-1}+n, \ldots, c_1 + n)$, \label{remark:rotated_a}
  \item[$(b)$] $C=(c_1, \ldots, c_{m-1}, c_{m}, c_{m}+n, c_{m-1}+n, \ldots, c_1 + n)$, \label{remark:rotated_b}
  and $\infty\not\in V(C)$,
  \item[$(c)$] $C=(c_1, c_2, \ldots, c_m, c_1 + n, c_2 + n, \ldots, c_{m}+n)$, and $\infty\not\in V(C)$.  \label{rotated_c}
\end{enumerate}
In the first two cases, the translation by $n$ acts on $C$ as a reflection, while in the latter case such an action is a rotation. In \cite[Proposition 3.7]{Buratti2008}, it is shown in particular that a $2$-factor of $K_V$ satisfying the assumptions of Proposition \ref{prop:1rot} contains exactly one cycle on which the translation by $n$ acts as a reflection, which then coincides with the cycle through $\infty$. Therefore, any cycle $C$ of $F$ fixed by $n$ and not passing through $\infty$ has the same form as in $(c)$.
\end{remark}

The following result can be seen as a generalization  of \cite[Theorem 3.2]{Buratti2009}.

\begin{proposition} \label{prop:1Rotnecessary2}
   Let $F$ be a $2$-regular graph of order $2n+1$ and let $r$ denote the number of cycles in $F$ 
   of even length.
   If $F$ satisfies the assumptions of Proposition \ref{prop:1rot} and its cycle 
   passing through $\infty$ has length $3$, then 
   either $n\equiv 0\pmod{4}$ or $\frac{n-1}{2}+r$ is an even integer.
\end{proposition}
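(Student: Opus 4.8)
The plan is to count in two ways the number $N$ of finite edges of $F$ (those with both endpoints in $\mathbb{Z}_{2n}$) whose difference is \emph{odd}. First I would pin down the difference multiplicities. Because $F+n=F$, every finite edge $\lfloor x,y\rfloor$ with $x-y\neq n$ is paired with the \emph{distinct} edge $\lfloor x+n,y+n\rfloor$ carrying the same difference, so each difference class other than $\{n\}$ occurs in $F$ an even number of times. Since $\Delta F\supset \mathbb{Z}_{2n}\setminus\{0\}$ and $F$ has exactly $2n-1$ finite edges (the cycle through $\infty$ uses the only two edges at $\infty$), the $4n-2$ entries of $\Delta F$ must realise every nonzero element of $\mathbb{Z}_{2n}$ exactly twice, with the element $n$ contributed by a single edge. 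As the $n$ odd elements thus account for $2n$ entries and each odd-difference edge contributes two of them, exactly $N=n$ finite edges have odd difference.

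Next I would recompute $N$ from the cycle structure described in Remark~\ref{rotatedcycles}. The cycle through $\infty$ has length $3$, hence equals $(\infty,c,c+n)$; its single finite edge has difference $n$, contributing $n\bmod 2$ to $N$. Every other cycle either is fixed by the translation $+n$ — and then, not meeting $\infty$, has form $(c)$ and even length — or occurs in a translate-pair $\{D,D+n\}$. Such a pair contributes an even number of odd-difference edges, since a cycle lying entirely in $\mathbb{Z}_{2n}$ changes parity an even number of times and $D+n$ merely repeats the differences of $D$. The crux is a parity lemma for a form-$(c)$ cycle $C=(c_1,\dots,c_m,c_1+n,\dots,c_m+n)$: its two connecting edges $\lfloor c_m,c_1+n\rfloor$ and $\lfloor c_m+n,c_1\rfloor$ both have difference $\equiv c_m-c_1+n$, and a telescoping of parities over $\{c_1-c_2,\dots,c_{m-1}-c_m,\;c_m-c_1+n\}$ shows that the number of odd differences among these $m$ values is congruent to $n\bmod 2$. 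Doubling by the $+n$ symmetry, $C$ contributes $2h_C$ odd-difference edges with $h_C\equiv n\pmod 2$.

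Finally I would assemble the pieces. Writing $k$ for the number of form-$(c)$ cycles, equating the two counts of $N$ yields $\lfloor n/2\rfloor\equiv k\,(n\bmod 2)\pmod 2$ after dividing the relation $n=(n\bmod 2)+2\sum_C h_C+(\text{even})$ by $2$. If $n$ is even this reads $n/2\equiv 0\pmod 2$, i.e. $n\equiv 0\pmod 4$. If $n$ is odd it reads $\tfrac{n-1}{2}\equiv k\pmod 2$; since the even-length cycles are exactly the $k$ form-$(c)$ cycles together with the even translate-pairs (an even number of cycles), we have $r\equiv k$, whence $\tfrac{n-1}{2}+r$ is even. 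The main obstacle is the parity lemma for form-$(c)$ cycles: one must treat the two connecting edges carefully, for it is precisely the ``$+n$'' they carry — together with the difference $n$ on the triangle through $\infty$ — that injects the term $n\bmod 2$ and is responsible for the mod-$4$ dichotomy.
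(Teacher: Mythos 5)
Your proposal is correct and follows essentially the same route as the paper's own proof: both rest on the cycle classification of Remark \ref{rotatedcycles}, on telescoping consecutive differences (summing to $n$ along a half of a translation-fixed cycle and to $0$ along a cycle of a translate-pair), and on the very same final congruence $\lfloor n/2\rfloor \equiv k\,(n\bmod 2)\pmod{2}$ combined with $r\equiv k\pmod{2}$ -- your double count of odd-difference edges is just the edge-counting reformulation of the paper's computation that $\sum_{i,j} d_{i,j}=sn$ has the parity of $\lfloor n/2\rfloor$, with your $k$ equal to the paper's $s$. One wording caution: in the assembly step the translate-pair contribution must be recorded as a multiple of $4$, not merely ``even,'' for the division by $2$ to preserve the congruence; this stronger fact is exactly what your justification delivers (each cycle of the pair individually has an even count of odd-difference edges, and the two counts are equal), so the argument stands, but the label ``(even)'' in your displayed relation undersells what is needed.
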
  
\begin{proof} Let $F$ be a $2$-regular graph of order $2n+1$ such that
\begin{enumerate}[leftmargin=15pt]
  \item\label{cond1} $V(F) = \mathbb{Z}_{2n}\ \cup\ \{\infty\}$,
  \item\label{cond2} $\Delta F \supset \mathbb{Z}_{2n}\setminus\{0\}$,
  \item\label{cond3} $F+n=F$.
\end{enumerate}
and let $C_{\infty}$ denote the cycle of $F$ through $\infty$. 
By assumption, $C_{\infty}$ has length $3$, and
by  conditions \ref{cond1} and \ref{cond3} we have that $C_{\infty}+n=C_{\infty}$; hence
$C_{\infty} = (\infty, g, g+n)$ for some $g\in \mathbb{Z}_{2n}$. 

Let $C_1, C_2, \ldots, C_{u}$ be the list of the cycles in $F$ distinct 
from $C_{\infty}$, with $C_{i}=(c_{i,1}, c_{i,2},  \ldots, c_{i,\ell_i})$ for every $1\leq i \leq u$. 
By condition \ref{cond3}, we can assume without loss of generality that
$C_i+n = C_i$ when  $1\leq i\leq  s$, and 
$C_i + n = C_{i+t}$ when $s+1 \leq i\leq  s+t$,  where $u=s+2t$.
Hence, for $1\leq i \leq s$ we have that $\ell_i$ is even, and by Remark \ref{rotatedcycles}
 we obtain that $c_{i,j+\ell_i/2} = c_{i,j} + n$
for every $1\leq j\leq \ell_i/2$. Now set $m_i=\ell_i/2$ when $1\leq i\leq s$, 
otherwise set $m_i=\ell_i$. Also, let $d_{i,j} = c_{i, j+1} - c_{i,j}$ 
(where the subscript $j$ is computed modulo $\ell_i$) and set $D_i=\{d_{i,j}\mid 1\leq j\leq m_i\}$
for every $1\leq i\leq s+t$. Considering that any translation preserves the differences, we have that
\[
   d\in\Delta C_i,\;\; \text{if and only if}\;\; d\in \pm D_i.
\]
for every $1\leq i\leq s+t$. By recalling condition \ref{cond2}, and considering that $\Delta C_{\infty}=\{\pm n\}$ and $\sum_{i=1}^{s+t} m_i = n-1$,
it follows that $\mathbb{Z}_{2n}\setminus\{0,n\} = \bigcup_{i=1}^{s+t} (\pm D_i)$. 
Also, since $c_{i,1}+\sum_{j=1}^{m_i}d_{i,j}=c_{i, m_i+1}$, we have that 
$\sum_{j=1}^{m_i} d_{i,j}=n$ when $1\leq i\leq s$, otherwise $\sum_{j=1}^{m_i} d_{i,j}=0$; hence
$\sum_{i=1}^{s+t}\sum_{j=1}^{m_i} d_{i,j}=sn$. Finally, considering that $\mathbb{Z}_{2n}\setminus\{0,n\}$ contains 
$2\lfloor \frac{n}{2} \rfloor$ odd integers and $-x\neq x$ for every $x\in \mathbb{Z}_{2n}\setminus\{0,n\}$,
it follows that $\bigcup_{i=1}^{s+t} D_i$ contains exactly $\lfloor \frac{n}{2} \rfloor$ odd integers, therefore
\[
sn\equiv \left\lfloor \frac{n}{2} \right\rfloor \pmod{2}.
\] 
If $n$ is even, then $n\equiv 0 \pmod{4}$. 
If $n$ is odd, then $\frac{n-1}{2}\equiv s\pmod{2}$. Denoting by $s'$ the number of 
even length cycles in $\{C_{s+1}, C_{s+2}, \ldots, C_{s+t}\}$ and recalling that $C_{i}\neq C_i+n\in F$ for 
$s+1\leq i\leq s+t$, it follows that the total number of even length cycles
in $F$ is $r=s+2s'$, hence $\frac{n-1}{2}\equiv r\pmod{2}$, that is, $\frac{n-1}{2} + r$ is even, and the assertion is proven.
\end{proof}

\Prop \ref{prop:1rot} tells us how to construct \OneRot solutions of order $2n+1$. These can then be used, following 
\Prop \ref{prop:2pyr}, to construct solutions of order $2n+2$.
Finally, Propositions  \ref{prop:1Rotnecessary1} and \ref{prop:1Rotnecessary2} give us necessary conditions for a 
\OneRot solution to exist. \\

We use the above results to construct \OneRot solutions to $OP(F)$ whenever $F$ is a $2$-regular graph
of order $4t+1$, thus $n=2t$ and $t \in \mathbb{N}$. Equation (\ref{eq:1Rot_Necessary}) defines $F$ as the graph
containing $u_i$ cycles of odd length $\ell_i$, and $w_j$ cycles of even length $m_j$, for every $i\in\{1,2,\ldots, h\}$
and $j\in\{1,2,\ldots, k\}$. Recalling the necessary condition in Proposition \ref{prop:1Rotnecessary1}, we have
\begin{eqnarray}
F = [^{u_1}\ell_1, \ldots,^{u_h}\ell_h,^{w_1}m_1, \ldots,^{w_k}m_k]	\label{eq:1Rot_newF} &:&|\{i\mid u_i \; \text{is odd}\}| = 1. \label{eq:1Rot_Necessary}
\end{eqnarray}
The graph $F$ must also satisfy  Equation (\ref{eq:1Rot_NewPrepoNecessary}), which implements \Prop \ref{prop:1Rotnecessary2}.

\begin{equation}
\begin{split}
 \exists! i : (\ell_i=3 \land u_i\; \text{is odd}) \Rightarrow  \\ 2t\equiv0\pmod{4}\;\; \lor \;\;    
 \left(\frac{2t-1}{2}+\sum^k_{i=1} w_i\right) \equiv0\pmod{2}. \label{eq:1Rot_NewPrepoNecessary}		
\end{split}
\end{equation}

The symmetries stated in \Rem \ref{rotatedcycles} reduce the labeling problem on $F$ to a simpler one on a new graph $F^*$, the asymmetric version of $F$, which can be seen as the union of $2$ subsets, namely the set of paths 
($\mathcal{P}$) and the set of cycles ($\mathcal{C}$).
To better describe the structure of $F^*$, we assume without loss of generality that $u_1$ is odd, and the remaining $u_i$ are even. Hence we can write 
$u_1 = 2a_1 +1$, $u_i=2a_i$ for every $i\in\{2,3,\ldots, h\}$, and $m_j=2\mu_j$ for every 
$j\in \{1,2,\ldots, k\}$. Thus Equation (\ref{eq:1Rot_FStar}) describes the structure of the reduced graph $F^*$,
\begin{align}
\label{eq:1Rot_FStar}
& F^*= \mathcal{C} \cup \mathcal{P},\\
& \mathcal{C}=\left[(\ell_1+1)/2,^{a_1} \ell_1, ^{a_2}\ell_2, \ldots, ^{a_h} \ell_h\right], \\
& \mathcal{P}=[[^{w_1} \mu_1, \ldots,^{w_k}\mu_k]]. 
\end{align}
where $\mathcal{P}$ is the graph containing $w_j$ paths with $\mu_j$ edges, for every $j\in \{1,2,\ldots, k\}$, and they are pairwise vertex-disjoint.
$\mathcal{C}$ also contains a cycle with $(\ell_1-1)/2$ nodes, namely the one with infinity.

In a more general and intuitive way, the rationale for $F^*$ is as follows. On the one hand, we consider the cycles of odd length in $F$. If there is an even number $u_i \equiv0 \pmod{2}$ of cycles of odd length $l_i$, these contribute in $F^*$ with $u_i/2$ cycles of length $l_i$ in $\mathcal{C}$. The only leftover cycle is of odd length $u_{\infty}$ (since the number of nodes is $4t+1$) is the one with $\infty$, which contributes with an open chain of length $(u_{\infty}-1)/2$ in $\mathcal{P}$. On the other hand, an even number $w_j\equiv0 \pmod{2}$ of cycles of even length $m_j$ are represented in $F^*$ as $w_j/2$ cycles of length $m_j$ in $\mathcal{C}$, similar to an even number of odd cycles. Any cycle of even length $m_j$ with single multiplicity $w_j=1$ becomes a chain of length $m_j/2$ in $\mathcal{P}$.

Note that the number of edges of $F^*$ is $2t-1$.
We seek to determine a vertex labeling of $F^*$ with the elements of $\mathbb{Z}_{4t}$ such
that 
\begin{enumerate}[leftmargin=15pt]
  \item $V(F^*)$ contains exactly one element in $\{x, x+2t\}$, for every $x\in \mathbb{Z}_{4t}$, 
  \item $\Delta F^* = \mathbb{Z}_{4t}\setminus\{0, 2t\}$. 
\end{enumerate}
This labeling of the vertices of $F^*$ leads to a labeling of $F$ satisfying \Prop \ref{prop:1rot}, and hence to a solution for $OP(F)$ (see Figure \ref{img:FStar}).

\subsection{ {(Almost)} \texorpdfstring{\TwoRot}{2-rotational} solutions to \texorpdfstring{$OP(F)$}{OP(F)}}
\label{sec:2rrot}
Here we describe a method to construct solutions to $OP(F)$ in all cases where there is no \OneRot solution and, in particular, when the necessary conditions of Propositions \ref{prop:1Rotnecessary1} and 
\ref{prop:1Rotnecessary2} do not hold.

Let $v=2n+1$ be a positive integer, and  
set $V = \big(\{0,1\} \times \mathbb{Z}_{n}\big)\ \cup\ \{\infty\}$. 
For every subgraph $\Gamma$ of $K_V$ and for every $i,j\in\{0,1\}$, let $\Delta_{ij} \Gamma$ be the list of $(i,j)$-differences of $\Gamma$ defined below:
\begin{eqnarray}
\label{2Rot:Differences}
  \Delta \Gamma_{ij} = \big\{x-y \;\big|\; \e{(i,x)}{(j,y)}\in E(\Gamma\setminus\{\infty\})\big\}.
\end{eqnarray}
For every $g\in \mathbb{Z}_{n}$ we denote by $\Gamma+g$ the graph with vertex set $V$ obtained from $\Gamma$ by replacing each vertex $(i,x)$  with $(i,x+g)$. 

The following result provides sufficient conditions for the existence of a solution to $OP(F)$.

\begin{proposition}\label{prop:2rot} Let $F=[\ell_1, \ell_2 \ldots,\; \ell_s]$ be a $2$-regular graph of order 
$2n+1$ satisfying the following conditions:
\begin{enumerate}[leftmargin=15pt]
  \item\label{2rot:cond1} $V(F) = \big(\{0,1\} \times \mathbb{Z}_{n}\big)\ \cup\ \{\infty\}$,
  \item\label{2rot:cond2} the vertices adjacent to $\infty$ are of the form $(0,x_0)$, $(1,x_1)$ for some $x_0,x_1\in \mathbb{Z}_n$,
  \item\label{2rot:cond3} if $n$ is odd, then $\Delta_{00} F = \Delta_{11} F = \mathbb{Z}_n\setminus\{0\}$ and 
  $\Delta_{01} F = \mathbb{Z}_n$, 
  \item if $n$ is even, then: \label{2rot:cond4}
  \begin{enumerate}
    \item\label{2rot:cond4.1} $F$ contains the path $P=\lfloor (0,0), (0,n/2), (1,n/2), (1,0)\rfloor$ in one of its cycles,
    \item\label{2rot:cond4.2}  $\Delta_{ij} (F - P) = \mathbb{Z}_n\setminus\{0,n/2\}$ for every $(i,j)\in\{(0,0),(0,1),(1,1)\}$.
  \end{enumerate}
\end{enumerate}

Then there exists a solution of $OP([\ell_1, \ell_2 \ldots,\; \ell_s])$. 
Furthermore,
if $C$ is an $\ell_1$-cycle of $F$ such that $\Delta_{01} C$ contains an integer distinct from $n/2$,
then there exists a solution
to $OP([\ell_1+1, \ell_2 \ldots,\; \ell_s])$.
\end{proposition}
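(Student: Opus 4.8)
The plan is to produce an explicit $2$-factorization of $K^*_{2n+1}=K_{2n+1}$ into copies of $F$ by translating a single well-chosen $2$-factor under $\mathbb{Z}_n$, and then, for the last assertion, to puncture one cross edge with a fresh fixed vertex $\infty'$ exactly as in the proof of Proposition \ref{prop:2pyr}.

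First I would set $\mathcal{F}=\{F+g\mid g\in\mathbb{Z}_n\}$ and sort the edges of $K_{2n+1}$ into orbits under the translation group $\mathbb{Z}_n$. The two orbits of edges at $\infty$ (one into each part $\{i\}\times\mathbb{Z}_n$) are swept out exactly once by condition \ref{2rot:cond2}. Every other orbit is indexed by a difference: within-part-$i$ edges by an unordered pair $\{d,-d\}$, and cross edges by a single $d\in\mathbb{Z}_n$. When $n$ is odd, condition \ref{2rot:cond3} asserts precisely that each nonzero difference occurs once in $\Delta_{00}F$ and in $\Delta_{11}F$, and each $d\in\mathbb{Z}_n$ occurs once in $\Delta_{01}F$; since every such orbit has full length $n$, the translates tile $K_{2n+1}$ and this case is complete.

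The hard part is $n$ even, where the involution $n/2$ creates short orbits: the within-part edges of difference $n/2$ form an orbit of length $n/2$, so the $n$ translates of any fixed $2$-factor cover each of its edges with constant even multiplicity, never exactly once. My plan is to use the path $P$ of condition \ref{2rot:cond4.1} to repair this by a recombination on the four vertices $A=(0,0)$, $B=(0,n/2)$, $C=(1,n/2)$, $D=(1,0)$. Writing the cycle of $F$ through $P$ as $A\to B\to C\to D\xrightarrow{\,Q\,}A$, where $Q$ is a path of ``good'' differences, I would pair the factors $F+h$ and $F+h+n/2$ for $h\in\{0,\ldots,n/2-1\}$ and replace their two special cycles by
\begin{align*}
\widetilde C_h &: A \to C \to B \to D \xrightarrow{\,Q+h\,} A,\\
\widetilde C_{h+n/2} &: B \to A \to D \to C \xrightarrow{\,Q+h+n/2\,} B,
\end{align*}
so that the two cross edges of difference $n/2$ (the diagonals $\lfloor A,C\rfloor$ and $\lfloor B,D\rfloor$) are used in place of one of the two copies of each within-part difference-$n/2$ edge $\lfloor A,B\rfloor$, $\lfloor C,D\rfloor$. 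A degree count shows each of $A,B,C,D$ receives degree $3$ from the six edges of the complete graph on these four vertices and degree $1$ from the external path $Q$, hence total degree $4$, which splits as $2+2$ between $\widetilde C_h$ and $\widetilde C_{h+n/2}$; both new cycles retain length $\ell_1$, so the recombined factors are again copies of $F$. It then remains to check, via condition \ref{2rot:cond4.2} ($\Delta_{ij}(F-P)=\mathbb{Z}_n\setminus\{0,n/2\}$), that all full-length orbits (every within-part and cross difference except $n/2$, together with the cross difference $0$ carried by $P$) tile exactly once; combined with the recombination, every edge is covered precisely once. The main obstacle is this bookkeeping of the difference-$0$ and difference-$n/2$ orbits, trading the over-covered within-part edges for the missing cross edges without disturbing the rest.

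For the final assertion I would argue exactly as in Proposition \ref{prop:2pyr}. By hypothesis the $\ell_1$-cycle $C$ has a cross edge $\lfloor(0,p),(1,q)\rfloor$ with $p-q=d\neq n/2$; subdividing it with a new fixed vertex $\infty'$ turns $C$ into an $(\ell_1+1)$-cycle and yields a $2$-factor $F'$ of order $2n+2$. Taking $\mathcal{F}'=\{F'+g\mid g\in\mathbb{Z}_n\}$ with $\infty,\infty'$ fixed, the edges $\lfloor\infty',(0,p+g)\rfloor$ and $\lfloor\infty',(1,q+g)\rfloor$ sweep out every edge at $\infty'$ exactly once, while the former orbit $\{\lfloor(0,p+g),(1,q+g)\rfloor\mid g\in\mathbb{Z}_n\}$ is no longer used. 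Since $d\neq n/2$ this cross orbit has full length $n$ and, together with $\lfloor\infty,\infty'\rfloor$, forms a $1$-factor $I$ of $K_{2n+2}$. All remaining edges are covered exactly as in the solution to $OP(F)$, so $\mathcal{F}'$ is a $2$-factorization of $K^*_{2n+2}=K_{2n+2}-I$, i.e.\ a solution to $OP([\ell_1+1,\ell_2,\ldots,\ell_s])$. The only place the hypothesis $d\neq n/2$ enters is in guaranteeing a genuine length-$n$ cross orbit, disjoint from the diagonals consumed by the recombination.
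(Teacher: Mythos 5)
Your proposal is correct and takes essentially the same route as the paper: the odd case is the same translate-and-count argument, and for $n$ even your pairwise recombination of $F+h$ with $F+h+n/2$ is exactly the paper's device of replacing, in half of the translates, the path $P$ by $P^*=\lfloor (0,0),(1,n/2),(0,n/2),(1,0)\rfloor$ (your $\widetilde{C}_h$ is precisely the cycle through $P^*+h$, so your modified factor is the paper's $F^*+h$). The ``furthermore'' step also matches the paper's proof --- subdivide a cross edge with $\infty'$ so that its orbit together with $\lfloor \infty,\infty'\rfloor$ is the deleted $1$-factor --- with the one caveat, which your closing sentence already acknowledges, that for $n$ even the subdivision must be applied to every factor of the recombined family (the paper's $H$ and $H^*$), not to pure translates $\{F'+g\mid g\in\mathbb{Z}_n\}$ as literally written.
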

\begin{proof}
\label{proof:2rot}
Let $\mathcal{F}=\{F+g\mid g\in[1,n]\}$ when $n$ is odd, otherwise 
let $\mathcal{F}=\{F+g, F^*+(n/2+g)\mid 1\leq g\leq n/2\}$, where $F^*$ is the $2$-regular graph obtained by replacing the path 
$P$ (which is contained in $F$ by condition \ref{2rot:cond4.1}) with $P^* = \lfloor (0,0), (1,n/2), (0,n/2), (1,0)\rfloor$. It is important to notice that in this case
\begin{equation}\label{FvsF*}
  \text{$F -  P = F^* - P^*$}.
\end{equation}
We claim that 
$\mathcal{F}$ is a solution of $OP(F)$. By condition \ref{2rot:cond1} and considering that the total number of edges (counted with their multiplicity) covered by 
$\mathcal{F}$ is $n|F| = n(2n+1) = |E(K_{2n+1})|$, to prove the assertion it is enough to show that every edge of $K_V$, with $V=\big(\{0,1\} \times \mathbb{Z}_{n}\big)\ \cup\ \{\infty\}$, is contained in some $2$-factor of $\mathcal{F}$.

Denoting with $(0,x_0)$ and $(1,x_1)$ the neighbours of $\infty$ in $F$ (condition \ref{2rot:cond2}), we have
that $\lfloor \infty, (i, a)\rfloor\in F-x_i + a$ for every $(i,a)\in V \setminus\{\infty\}$.
By recalling that \eqref{FvsF*} holds when $n$ is even, it follows that every edge of $K_V$ 
incident to $\infty$ belongs to some $2$-factor of $\mathcal{F}$.

Now let $(i,a)$ and $(j,b)$ be two distinct vertices of $V \setminus\{\infty\}$ such that 
$a-b\neq n/2$ for $n$ even.
By conditions  \ref{2rot:cond3} and \ref{2rot:cond4.2}, there exists an edge of $F$, 
say $\lfloor(i,a'), (j,b')\rfloor$ such that $a'-b'=a-b$. It follows that
$\lfloor(i,a), (j,b)\rfloor = \lfloor(i,a'), (j,b')\rfloor + (b-b') \in F+(b-b')$. 
By taking into account \eqref{FvsF*} for $n$ even, we have that $\lfloor(i,a), (j,b)\rfloor$ belongs to some $2$-factor of $\mathcal{F}$.
It is not difficult to check that every edge of the form 
$\lfloor(i,a), (j,a+n/2)\rfloor$, with  $1\leq a\leq n/2$, is contained in $P+a$ or $P+(n/2+a)$. Hence every edge of $K_V$ is contained in some 
$2$-factor of $\mathcal{F}$ which is therefore a solution to $OP(F)$.

Now let $C=(c_0, c_1,\ldots, c_{\ell_1}-1)$ be the $\ell_1$-cycle of $F$ such that $\Delta_{01} C$ contains an element
distinct from $n/2$; in other words, $C$ contains an edge of the form $\lfloor (0,y_0), (1, y_1)\rfloor$ with 
$y_0-y_1\neq n/2$; hence, this edge does not belong to $P$.
Without loss of generality, we can assume that $c_0= (0,y_0)$ and $c_1=(1,y_1)$. 

Set $H$ and $H^*$ be the $2$-regular graphs of order $2n+2$ obtained from $F$ and $F^*$, respectively, by replacing the
edge $\lfloor c_0, c_1 \rfloor$ with the $2$-path $\lfloor c_0, \infty', c_1 \rfloor$, where $\infty'\not \in V$.
Also, $I=\{\lfloor\infty, \infty'\rfloor\}\cup \{\lfloor c_0+g, c_1+g \rfloor \mid 1\leq g\leq n\}$ is clearly a $1$-factor of 
$K_{2n+2}= K_{V\cup\{\infty'\}}$.
Finally, 
let $\mathcal{H}=\{H+g\mid g\in[1,n]\}$ when $n$ is odd, otherwise 
let $\mathcal{H}=\{H+g, H^*+(n/2+g)\mid 1\leq g\leq n/2\}$. 

We claim that $\mathcal{H}$ is a solution to 
$OP([\ell_1+1, \ell_2 \ldots,\; \ell_s])$.
Since $C$ is also a cycle of $F^*$ for $n$ even, both $H$ and $H^*$ are $2$-regular graphs of 
$K_{2n+2}$ isomorphic to $[\ell_1+1, \ell_2 \ldots,\; \ell_s]$. Also, considering that
$\mathcal{F}$ is a $2$-factorization of $K_V$, every edge of $K_{V\cup\{\infty'\}} - I$ not incident to $\infty'$
is contained in some $2$-factor of $\mathcal{H}$. 
Since $H - P = H^* - P^*$ and $\lfloor \infty', (i,a)\rfloor = \lfloor \infty', c_i\rfloor + (a - y_i)$, it follows that
every edge incident to $\infty'$ belongs to some $2$-factor of $\mathcal{H}$, 
therefore $\mathcal{H}$ provides a solution to $OP([\ell_1+1, \ell_2 \ldots,\; \ell_s])$.
\end{proof}

A factorization $\mathcal{F}$ of $K_{2n+1} = K_{V}$, with $V=(\{0,1\}\times \mathbb{Z}_{n})\cup \{\infty\}$, 
constructed as in Proposition \ref{prop:2rot},  {when $n$ is odd}, is called \TwoRot, since 
the permutation group $G=\{\tau_g \mid g\in \mathbb{Z}_{n}\}$ of $V$, where $\tau_g$ fixes $\infty$ and maps 
$(i,x)\in V\setminus\{\infty\}$ to $(i,x + g)$, is an automorphism group of $\mathcal{F}$ whose action on 
$V\setminus\{\infty\}$ creates two orbits of size $n$. 

The idea of constructing $2$-factorizations of $K_{2n+1}$ with $n$ even, as described in Proposition \ref{prop:2rot}, was first presented in \citep{Deza2010}.

\section{Solving instances of the Oberwolfach Problem}
\label{sec:solOP}

\subsection{Computing \texorpdfstring{\OneRot}{1-rotational} solutions}
\label{sub:Solvin1Rot}

Recalling the content of Section \ref{sec:1rot}, we propose a linear-time algorithm that implements \Prop \ref{prop:1Rotnecessary1} and the related \Rem \ref{rotatedcycles}, and reduces $F$ to $F^*$. Afterward, $CP$ solves the problem over $F^*$, and therefore the labeling of $F$ is retrieved.\\
Algorithm \ref{1RotAlg} starts by reducing $F$ to $F^*$ with lines (5-8), where the only unpaired cycle ($u_i =1 \mod{2}$ as of Equation \ref{eq:1Rot_Necessary}) of odd length $l_i$ reduces to a cycle of length $(l_i-1)/2$ in $F^*$ (symmetry of case $a$). This latter cycle contains node $\infty$. In lines (9-12), a pair of 2 isomorphic ($u_i>1$) cycles of odd lengths $l_a$ and $l_b$ is reduced to a single cycle of length $l_i=l_a$ in $F^*$ (symmetry of case $c$). In the same way, in lines (15-17),  a pair of 2 isomorphic ($w_i>1)$ cycles of even length $m_a=m_b$ is reduced to a single cycle of length $m_i=m_a$ in $F^*$ (symmetry of case $c$). In lines (18-21), the remaining unpaired cycles ($w_i=1$) of even lengths $m_i$ are reduced to open chains of length $m_i/2$ in $F^*$ (symmetry of case $b$). We may derive $F$ from $F^*$ by an inverse constructive process.

\begin{algorithm}[!ht]
\scriptsize
\begin{algorithmic}[1]
\State \textbf{Input:} The original graph $F$
\State \textbf{Output:} The transformed version of the graph, $F^*$
\State {infinite=$false$, and $V_{CP}=D_{CP}$=0}
\For{$i$ in $u_i$}  \Comment{Iterate through odd-length cycles}
\If{$u_i$ $\equiv$ 1 \; (mod\;2) \&\& infinite=false }  \Comment{The cycle with infinite}
		\State{Put a chain of length $(l_i-1)/2$ in $F^*$ in position $0$;}
		\State{$u_i--;\quad V_{CP}+=(l_i-1)/2;\quad D_{CP}=l_i/2-2;$}
		\State{infinite=$true$;}
\Else \Comment{$u_i > 1$. Pair of odd-length cycles. Simplify one.}
		\State{Put a cycle of length $l_i$ in $F^*$;}
		\State{$u_i=u_i-2;\quad V_{CP}+=l_i;\quad D_{CP}=l_i;$}
\EndIf
\EndFor
\For{$i$ in $w_i$}  \Comment{Iterate through even-length cycles.}
\If{$w_i > 1$ } \Comment{Pair of even-length cycles. Simplify one.}
		\State{Put a cycle of length $m_i$ in $F^*$;}
		\State{$w_i=w_i-2;\quad V_{CP}+= m_i;\quad D_{CP}=m_i;$}
\Else{ $w_i=1$} \Comment{Treat the cycle as a chain of half length.}
		\State{Put a chain of length $m_i/2$ in $F^*$;}		\State{$w _i--;\quad V_{CP}+=m_i/ 2;\quad D_{CP}=m_i/2;$}
\EndIf
\EndFor
\State \textbf{return} $F^*$
\end{algorithmic}
\caption{\textbf{Algorithm 1: Reducing to $F^*$ }}
\label{1RotAlg}
\end{algorithm}

Following the reduction, the $F^*$ labeling problem \FStar \; aims at finding a labeling for $F^*$ so that there is a solution to the complete $OP(F)$.

\begin{problem}[\FStarName]\label{problem:F_labeling} Let $F^*=(V,E)$ be a graph of order $|V(F^*)|=\gamma+1$, and let $v_i \in V$ be an element in $G\cup\{\infty\}$ where $G=\mathbb{Z}_{2\gamma}$. Also, let $F^*=O\cup C$ with $O$ set of open chains and $C$ set of closed cycles. 
For each node $i \in \Bar{V}=V\backslash\{\infty\}$  the \FStarName \; Problem asks to assign a label $n_i \in G$ so that the following conditions hold:
\begin{enumerate}
	\item Each element in the set $\tilde{V}$ receives a unique label from $G$. \\
	 $n_{\alpha} \neq n_{\beta} \, \text{for all } \, n_{\alpha}, n_{\beta} \in \Bar{V}$, \label{Fstar:cond1}
	\item The set $\Bar{V}$ contains either label $n_{\alpha}$ or its $\gamma$-translated label.\\
	$n_{\alpha} \in \Bar{V} \lor n_{\beta}=n_{\alpha}+\gamma \Mod{2\gamma} \in \Bar{V}$ with $n_{\alpha}\in\mathbb{Z}_{2\gamma}$\label{Fstar:cond2},
	\item  $\Delta F^*$ has cardinality $\lambda=2\gamma-2$ and contains all the elements in $G \backslash \{0,\gamma\}$ with multiplicity $1$.
	$|\Delta F^*|=2(\gamma-2) \land \Delta F^* = G\backslash \{0, \gamma\}$. \label{Fstar:cond3}

\end{enumerate}
\end{problem}

The corresponding $CP$ model (\ref{eq:1ROT_SetV}-\ref{eq:1ROT_AllDifferentD}) describes \FStar.

We remark that \textit{alldifferent} and \textit{card} are typical $CP$ operators on arrays of elements \citep{Beldiceanu2010}. The first operator requires all array elements to exhibit different values. The second one, which takes an additional integer argument value $c$, imposes the cardinality of the integer value $c$ in the given array. 

\begin{align}
  V =\{  n_i \;|\; n_i \in G \}   ,	\label{eq:1ROT_SetV}\\
\texttt{alldifferent(V)} \qquad dom(V)=[0,2\gamma),   \label{eq:1ROT_AlldifferentV}\\
\footnotesize{\texttt{card($V| n_i$)}+\texttt{card($V|(n_i+\gamma\Mod{2\gamma}) $)} =1} \qquad \text{for all } n_i \in \mathbb{Z}_\gamma, \label{eq:1ROT_CardinalityV}\\
 D=dC \cup dO,    \label{eq:1ROT_DeltaSet} 	\\
dC=\{(n_{\alpha} - n_{\beta}  \Mod{2\gamma}) \} \qquad \text{for all } \alpha,\beta \in V \land \lfloor \alpha,\beta \rfloor \in C, \label{eq:1ROT_deltaE} \\
dO=\{ \omega_i-\phi, \phi-\omega_i \Mod{2\gamma}\} \qquad \text{for all } o_i =[\omega_1,\dots,\omega_i] \in O, \label{eq:1ROT_deltaO} \\
\qquad \phi=\omega_1+\gamma \Mod{2\gamma},  \nonumber \\
\texttt{alldifferent(D)} \qquad dom(D)=(0,2\gamma)\backslash\{\gamma\}. \label{eq:1ROT_AllDifferentD}
\end{align}

Equations (\ref{eq:1ROT_SetV})-(\ref{eq:1ROT_AlldifferentV}) enforce Condition (\ref{Fstar:cond1}) in  \FStar, while (\ref{eq:1ROT_CardinalityV}) enforces Condition (\ref{Fstar:cond2}) for $\gamma$-translated labels. The difference-set is split into two subsets, as in Equation (\ref{eq:1ROT_DeltaSet}): $dC$ in (\ref{eq:1ROT_deltaE}) for the edges over $F^*$, and $dO$ for open chains in (\ref{eq:1ROT_deltaO}). The virtual label $\phi$ is reported in the latter subset. Finally, Constraint (\ref{eq:1ROT_AllDifferentD}) enforces Condition (\ref{Fstar:cond3}) in  \FStar. Therefore, the problem of labeling $F^*$ collapses to a feasible assignments of set $V$, represented in (\ref{eq:1ROT_SetV}).

\begin{example}
(referring to Problem \ref{problem:F_labeling})
	Consider an $OP(F=[5,^23,^24,6])$ of order $4t+1=25$ with $t=6$. $F$ reduces to $F^*$, and the simplified instance is  $OP(F^*=[3_{\infty},3,4,3_c])$ where $O=[3_c]$ and $3_{	\infty}$ is the cycle with $\infty$. Therefore $\gamma=12$ and nodes in $V(F^*)$ acquire their labels from $\mathbb{Z}_{24}\cup\{\infty\}$. Figure \ref{img:FStar} represents the reduced $F^*$, with the virtual node $12$. $\Delta F^*=G\backslash\{0,12\}$, hence the labeling is a valid solution for the \FStar. Figure \ref{img:FStar_sol} represents the corresponding  labeling for $F$.
\end{example}
\begin{figure}[!ht]
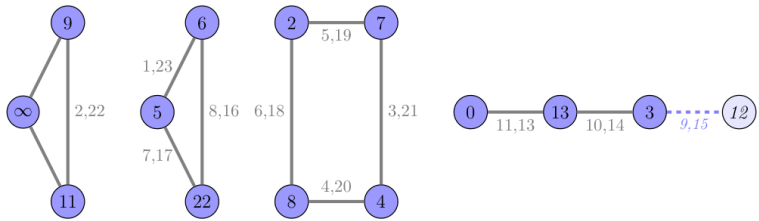

\centering
  \Includiamolo{width=0.8\textwidth}{OP_FStar\Colorato}
      \caption{$F^*$ instance for $F=[5,^23,^24,6]$.}
    \label{img:FStar}
\end{figure}

\begin{figure}[!ht]
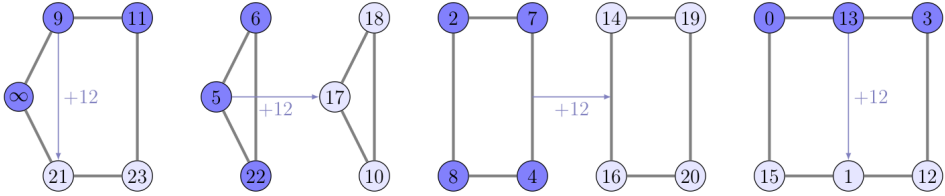

\centering
  \Includiamolo{width=\textwidth}{OP_FStar_Complete\Colorato}
    \caption{$F$ instance derived from $F^*=[3_{\infty},3,4,3_c]$.}
    \label{img:FStar_sol}
\end{figure}

A $2-factor$ $F$ of order $4t+1$ which generates a \OneRot solution for $OP(F)$ satisfies \Prop \ref{prop:1rot}, \ref{prop:1Rotnecessary1}, and \ref{prop:1Rotnecessary2}. Correspondingly, a solution of order $4t+2$ can be derived from $F$, according to \Prop \ref{prop:2pyr}, and its  polynomial-time computation works as follows.
Given $F=[l_1,l_2,...,l_s]$ and $F'=[l_1+1,l_2,...,l_s]$, a new node $\infty'$ joins the cycle $l_1$. The new node replaces a path $P=\lfloor c_1,c_2\rfloor \in l_1$ with a new path $P^*=\lfloor c_1,\infty',c_2\rfloor$ in the cycle $l_1$. Therefore, the difference-set of $F'$ omits values $c_1-c_2$ and $c_2-c_1$ (in modulo). For our computational tests, node $\infty'$ is arbitrarily inserted between two nodes $c_1,c_2$ so that $c_1-c_2=c_2-c_1 \mod{2t}$. Solutions of order $4t+2$ require the same computational effort as \OneRot instances of order $4t+1$. Moreover, we highlight how multiple instances of order $4t+2$ originate from the same instance of order $4t+1$ (see Example \ref{example:1Rot_Higher}).
\\
\begin{example}
\label{example:1Rot_Higher}
	Consider two instances of order $4t+2$, such as $OP(19,4,3)$ and $OP(18,5,3)$ with $t=6$. Both the instances originate from  $OP(18,4,3)$, of order $4t+1$.
\end{example}


\subsection{Computing (almost) \texorpdfstring{\TwoRot}{2-rotational} (\texorpdfstring{$n$}{n} odd)}
\label{sub:Solving2Rot}
This class of solutions derives from \Prop \ref{prop:2rot} (see also Section \ref{sec:2rrot}). Since that proposition distinguishes between odd and even values of $n$, we present the approach for odd values of $n$, and discard Condition (\ref{2rot:cond4}) of \Prop \ref{prop:2rot}. For even values of $n$, see the Appendix (\ref{sec:appendix}).\\

Given the \emph{2-regular} graph $F=(V,E)$ of order $2n+1$, recalling that $n$ is odd, we write $n=4t+3$ where $t \in \mathbb{N}$.
The set $V = (\{0,1\} \times G)\ \cup\ \{\infty\}$ represents nodes labels, where $G$ is the additive group $\mathbb{Z}_{2t+1}$. Without loss of generality, $\infty$ lies in the longest cycle of $F$. 
There are $3$ difference-sets, as Condition \ref{2rot:cond3} of \Prop \ref{prop:2rot} states. Each difference $\lfloor \alpha,\beta \rfloor \in E(F)$ goes into a set depending on $\{0,1\}$ labels of both $\alpha$ and $\beta$. We propose a two-step approach that initially determines the first labels and then the second ones. Once the first labels are determined, the problem resembles a \OneRot problem where the decision variables are in a set of integers. On the other side, there are $3$ difference-sets, as described by Equation (\ref{2Rot:Differences}).
The first-step solution provides information about the type of edge (e.g., $\lfloor \alpha=(i,a),\beta=(i,b) \rfloor \;|\; a,b \in G$ is in the difference-set $\Delta F_{ii}$), and the second-step exploits such knowledge.

The \BLPName \; \BLP is the first-step subproblem, and asks to label each node $i \in V(F)\backslash\{\infty\}$ with a coordinate $c_i \in \{0,1\}$, namely the \textit{binary label}. Once the solution of  \BLP is given, the \GLPName \; \GLP seeks to assign a second coordinate $n_i \in G$, namely the \textit{group label}, to each node, so that Condition \ref{2rot:cond3} of \Prop \ref{prop:2rot} holds.  Differences of the type 
$\Delta_{01} F$
are directed from $c_\alpha=c_x=0$ to $c_\beta=c_y=1$.
Without loss of generality, the direction is arbitrarily fixed. Besides, 
$\Delta_{00} F$ and $\Delta_{11} F$ contain both the two undirected differences for each edge (e.g., both $\alpha-\beta$ and $\beta-\alpha$).

According to Condition \ref{2rot:cond1} of \Prop \ref{prop:2rot}, there are exactly $2t+1$ nodes for each binary label. Condition \ref{2rot:cond2} states that two nodes adjacent to $\infty$ have different binary labels. Condition \ref{2rot:cond3} defines difference-sets' cardinalities as 
$|\Delta_{00} F|=|\Delta_{11} F|=2t$ while 
$|\Delta_{01} F|=2t+1$.  \BLP formalizes such conditions.\\

\begin{problem}[\BLPName]\label{problem:Binarylabeling} Let $F=(V,E)$ be a \emph{2-regular} graph of order $|V|=4t+3$.  \BLP asks to assign to each node $i \in \Bar{V}=V\backslash\{\infty\}$ a \textit{binary label} $c_i \in \{0,1\}$ so that the following conditions hold:
\begin{enumerate}[leftmargin=15pt]
	\item The two nodes $\alpha, \beta \in \Bar{V}$ adjacent to $\infty$ have different binary labels. \\ $\exists \lfloor \alpha,\infty \rfloor \land \lfloor \beta,\infty \rfloor : c_{\alpha} \neq c_{\beta}$, \label{BinaryProb:cond1}
	\item There are exactly $2t+1$ directed differences (edges) between nodes with different binary labels. \\$| \Delta_{01} F = \{ \alpha-\beta \;|\; \e{(0,\alpha)}{(1,\beta)} \; \text{for all } \alpha,\beta \in G\}|=2t+1$. 
	 \label{BinaryProb:cond2} 
		
\end{enumerate}
\end{problem}
Equations (\ref{eq:2ROT_CSet}-\ref{eq:2ROT_ColoringInfinite}) formulate \BLP in $CP$.

\begin{eqnarray}
C =\{c_i\;|\; c_i  \in \{0,1\} \} \quad &  \text{for all } i \in \Bar{V}, \label{eq:2ROT_ColoringVarSet} \label{eq:2ROT_CSet} \\
dM=\{d_j\;|\; d_j\in \{0,1\} \} \quad &\text{for all } j \in 0,1,\dots,E(F\backslash\{\infty\}), \label{eq:2ROT_ColoringDiffSet} \\
  d_j=\begin{cases}
    1 & \text{if $c_{\alpha}=0, c_{\beta}=1$} \\
    0 & \text{otherwise}.
  \end{cases} & \text{for all } \alpha,\beta \in \Bar{V} \land \lfloor \alpha,\beta \rfloor,\label{eq:2ROT_ColoringDiffElement} \\
\texttt{card($dM|\; 1$)}=2t+1, & \quad \label{eq:2ROT_ColoringCount}	 \\
\texttt{card($C|\; 1$)}=2t+1, & \quad \label{eq:2ROT_ColoringNodesCount1}	 \\
\texttt{card($C|\; 0$)}=2t+1, & \quad \label{eq:2ROT_ColoringNodesCount2}	 \\
c_{\alpha}=1 \land c_{\beta}=0 &\quad  \text{if} \; \lfloor \alpha,\infty \rfloor \land \lfloor \beta, \infty \rfloor \land \alpha \neq \beta. \label{eq:2ROT_ColoringInfinite}
\end{eqnarray}

\begin{example}
	(referring to Problem \ref{problem:Binarylabeling})
	Consider an $OP(5,6)$ of order $4t+3=11$ with $t=2$. An example of binary labels assigned according to the \BLP  is in Figure \ref{img:BLPProblem}).
\end{example}

\begin{figure}[!ht]
\centering
  \Includiamolo{width=0.5\textwidth}{OP_2Rot_1\Colorato}
    \caption{\BLP over $OP(F=[5,6])$.}
    \label{img:BLPProblem}
\end{figure}

Each node $i \in \Bar{V}$ acquires a binary label $c_i$, hence the solution is the set $C$ in Equation \ref{eq:2ROT_CSet}. Each element $d_j \in dM$ (\ref{eq:2ROT_ColoringDiffSet}) is 1 if and only if the oriented edge $\lfloor \alpha,\beta \rfloor$ connects a node $\alpha: c_{\alpha}=0$ with $\beta: c_{\beta}=1$, and $0$ otherwise.
Constraint (\ref{eq:2ROT_ColoringCount}) ensures that Condition (\ref{BinaryProb:cond2}) of \BLP holds, while Constraints (\ref{eq:2ROT_ColoringNodesCount1}) and (\ref{eq:2ROT_ColoringNodesCount2}) bound the cardinality of binary-labeled nodes in $V$. (\ref{eq:2ROT_ColoringInfinite}) implements Condition (\ref{BinaryProb:cond1}) in \BLP by hard-fixing labels of the two nodes adjacent to $\infty$. 
\newline
\newline
Computational solutions for \TwoRot instances led us to understand the underlying structure of \BLP. Correspondingly, it was possible to devise a general polynomial-time algorithm to solve \BLP. The rationale is to search for known patterns and insert parts of solution (e.g,: label a subset of nodes) until the problem reduces to a basic pattern of the form $F[3]$, $F[5,6]$, $F[^53]$, and $F[^35]$.
Algorithm \ref{2RotBLPAlg_Short} in the Appendix (see \ref{apx:BLPAlgo}) presents such a procedure.

Once \BLP returns the binary labels, solving the $OP(F)$ is matter of a group labeling over the additive group $G$. Condition (\ref{2rot:cond3}) of \Prop \ref{prop:2rot} holds on the difference-sets. In analogy with the labeling for \OneRot methods, each group-label $n_i \in G \;|\; i \in V(F)$ translates into two values in (two) difference-set, depending on its binary label. 
Difference-sets (\ref{2Rot:Differences}) fulfill Equations (\ref{eq:2ROT_Delta00}-\ref{eq:2ROT_Delta01}).
\begin{eqnarray}
\displaystyle
\Delta_{00} F=\{n_{\alpha}-n_{\beta} \Mod{2t+1} : \label{eq:2ROT_Delta00}\\
\text{for all } \alpha,\beta \in V(F) \land \lfloor \alpha,\beta \rfloor \land c_{\alpha}=c_{\beta}=0 \} \nonumber \\
\Delta_{11} F=\{n_{\alpha}-n_{\beta} \Mod{2t+1} : \label{eq:2ROT_Delta11}\\   \text{for all } \alpha,\beta \in V(F) \land \lfloor \alpha,\beta \rfloor \land c_{\alpha}=c_{\beta}=1 \} \nonumber \\
\Delta_{01} F=\{n_{\alpha}-n_{\beta} \Mod{2t+1} : \label{eq:2ROT_Delta01} \\  \text{for all } \alpha,\beta \in V(F) \land \lfloor \alpha,\beta \rfloor \land c_{\alpha}=0,c_{\beta}=1 \}  \nonumber
\end{eqnarray}

Condition \ref{2rot:cond2} of \Prop \ref{prop:2rot} infers domains on sets so that the desired $2-factor$  $F$ is a \TwoRot solution for $OP(F)$. Therefore, the \GLPName\;\GLP formalizes \Prop \ref{prop:2rot}.

\begin{problem}[\GLPName]\label{problem:GLP} Let $F=(V,E)$ be a \emph{2-regular} graph of order $|V|=4t+3$. $V(F) = \{\{0,1\} \times G \}\cup \{ \infty \}$ is the set of nodes over $F$, where $G$ is the additive group $\mathbb{Z}_{2t+1}$. For each node $i \in \Bar{V}=V\backslash\{\infty\}$ with its binary label $c_i \in \{0,1\}$, the \GLP asks to assign a group label $n_i \in G$ so that the following conditions hold:
\begin{enumerate}[leftmargin=15pt]
	\item Undirected difference-sets are so that $\Delta_{00} F=\Delta_{11}F=G\backslash\{0\}$, \label{GLP:cond1}
	\item  The directed difference-set is so that $\Delta_{01} F=G$. \label{GLP:cond2}
\end{enumerate}
\end{problem}

Equations (\ref{eq:2ROT_labelingMasterSet}-\ref{eq:2ROT_labelingAlldifferentDiffAB}) formulate the \GLP  with $CP$.
\begin{eqnarray}
& V =\{A\cup B \}, \quad & \quad \label{eq:2ROT_labelingMasterSet}   \\
A=\{a_i \;|\; a_i \in G,c_i=0\} \quad & \quad B=\{b_i\;|\;b_i \in G,c_i=1\} , \label{eq:2ROT_labelingColorSets} \\
\texttt{alldifferent(A)} \quad & \quad dom(A)=[0,2t+1), \label{eq:2ROT_labelingAlldifferentA}	 \\
\texttt{alldifferent(B)} \quad & \quad dom(B)=[0,2t+1), \label{eq:2ROT_labelingAlldifferentB}	\\
dA=\{(a_{\alpha} - a_{\beta}  \mod (2t+1)) \} \quad &\quad  \text{for all } \alpha,\beta \in A \land \lfloor \alpha,\beta \rfloor, \label{eq:2ROT_labelingSetA} \\
dB=\{(b_{\alpha} - b_{\beta}  \mod (2t+1)) \} \quad & \quad \text{for all } \alpha,\beta \in B \land \lfloor \alpha,\beta \rfloor, \label{eq:2ROT_labelingSetB} \\
dAB=\{(a_{\alpha} - b_{\beta}  \mod (2t+1)) \} \quad & \quad \text{for all } \alpha \in A, \beta \in B \land \lfloor \alpha,\beta \rfloor, \label{eq:2ROT_labelingSetAB} \\
\texttt{alldifferent(dA)} \quad & \quad dom(dA)=(0,2t+1), \label{eq:2ROT_labelingAlldifferentDiffA}	\\
\texttt{alldifferent(dB)} \quad & \quad dom(dB)=(0,2t+1) , \label{eq:2ROT_labelingAlldifferentDiffB}	\\
\texttt{alldifferent(dAB)} \quad & \quad dom(dAB)=[0,2t+1) , \label{eq:2ROT_labelingAlldifferentDiffAB}	\\
\texttt{card(dA)}=\texttt{card(dB)}=2t \quad & \quad \texttt{card(dAB)}=2t+1. \label{eq:2ROT_labelingCardinalityDifferences}
\end{eqnarray}

Equation  (\ref{eq:2ROT_labelingMasterSet}) represents the set $V$ as the union of $A$ and $B$, respectively the subset of nodes with binary label $c_i=0$, and $c_i=1$. The solution is a feasible assignment for $V$. Constraints (\ref{eq:2ROT_labelingAlldifferentA})-(\ref{eq:2ROT_labelingAlldifferentB}) force on both $A$ and $B$ domains over $G$.
Difference-sets in (\ref{eq:2ROT_labelingSetA}-\ref{eq:2ROT_labelingSetAB}) rewrite sets in Equations (\ref{eq:2ROT_Delta00}-\ref{eq:2ROT_Delta11}). Finally, constraints and domains in (\ref{eq:2ROT_labelingAlldifferentDiffA}-\ref{eq:2ROT_labelingAlldifferentDiffAB}) enforce Conditions \ref{GLP:cond1} and \ref{GLP:cond2} of \GLP . In particular, Constraint  (\ref{eq:2ROT_labelingCardinalityDifferences}) ensures that difference-sets have the required cardinalities. The \GLP  solution generates a complete solution for $OP(F)$, with roto-translation similar to the ones explained for \OneRot methods. \Prop \ref{prop:2rot} describes how $F$ generates the other $2-regular$ copies.\\

\begin{example}
(referring to Problem \ref{problem:GLP})
	Consider an $OP(F=[5,6])$ of order $4t+3=11$ with $t=2$. Assuming binary labels are assigned, an example of group labels from  \GLP is represented in Figure \ref{img:GLPProblem}.
\end{example}
	
\begin{figure}[!ht]
\centering
  \Includiamolo{width=0.5\textwidth}{OP_2Rot_2\Colorato}
    \caption{\GLP over $OP(F=[5,6])$.}
    \label{img:GLPProblem}
\end{figure}

According to \Prop \ref{prop:2rot}, a solution of order $4t+3$ generates a solution of order $4(t+1)$. The process requires polynomial-time, and is as follows. Starting from the $4(t+1)$ instance, a 2-path $P=[a,i^*,b]$ is selected from the cycle $C_\infty$ (the cycle containing the $\infty$ node), and replaces the edge $\lfloor a,b \rfloor$. The resulting graph is the \emph{2-regular} $F^*$, of order $|V(F^*)|=4t+3$. The \TwoRot approach solves the $4t+3$ instance on $F^*$. Afterwards,the pruned node $i^*$ goes back to $F^*$, so that $F^*$ becomes $F$. Without loss of generality, $i^*$ lies between nodes with different binary labels $i^*$ inside the cycle $C_\infty$, so that $P^*=[\alpha,i^*,\beta]\;|\;c_{\alpha}\neq c_{\beta}$. Node $i^*$ is relabeled as $\infty_2$ while the original $\infty$ becomes $\infty_1$. Therefore, the difference-set 
$\Delta_{01} F$
on $F$ loses the difference $n_\alpha -n_\beta$ (or $n_\beta - n_\alpha$ if and only if $c_\beta=0,c_\alpha=1$).

\section{Experimental Results}
\label{sec:results}

We implemented the proposed algorithms and formulations with Java 1.8 (see Section \ref{sec:package} for code), \textit{IBM ILOG CPLEX} and \textit{CP Optimizer} $12.7$. Tests ran on a \textit{Intel(R) Core i5-3550 @ 3.30GHz} with $4GB$ of RAM, a computer.
\citet{Deza2010} solve instances of much smaller size (order $\le 40$), with undisclosed algorithms running on a dedicated cluster machine \citep{SHARCNET}. The new 20 orders we solve here, from 41 to 60, are  $241200$, that is 22 times greater than those ($10897$) solved in \citep{Deza2010}. Moreover, they generalize $r$-\emph{rotational} rules also with $r \notin \{1,2\}$, while our contribution deals only with $r \in \{1,2\}$.
Table \ref{table:Results} reports computational results for instances with $n \in [40,60]$, and complete solutions are available online (see Section \ref{sec:package}). Timelimits for \GLP and \FStar\; are  $5 \cdot (1 + |V(F)|/50)$ seconds, and $|V(F^*)|/20$ seconds, respectively, while Algorithm \ref{2RotBLPAlg_Short} solves  \BLP in  negligible time. \\
The \TwoRot approach (see \ref{sub:Solving2Rot}) with odd $n$ values (see \Prop \ref{prop:2rot}) solves instances of order $4t+3$. Solutions of order $4t$ directly derive from the $4t+3$ ones.
The \OneRot approach (see \ref{sub:Solvin1Rot}) solves instances of order $4t+1$, and hence $4t+2$. 
While solving orders $4t+1$, we discovered that certain instances do not have a \OneRot solution, and, consequently, we derived \Prop \ref{prop:1Rotnecessary2}. 

The formal proof stemmed after this empirical evidence. The \TwoRot approach with even $n$ values (see \ref{apx:2RotNeven}) solves instances not fulfilling requirements in \Prop \ref{prop:1Rotnecessary2}. 
We generated all the order-dependent partitions of integers $i \in [40,60]$ with at least three or more cycles (tables), since \citet{TRAETTA2013984} provides complete solutions to the two-table $OP$.
Each order ($1^{st}$ column of Table \ref{table:Results}) is tackled by means of \TwoRot and/or \OneRot rules ($3^{rd}$ column). The $time$ ($4^{th}$ column) represents the total time required for the class $OP$, so that all the instances ($5^{th}$ column) have a solution ($6^{th}$ column). Correspondingly, the average time per instance ($7^{th}$ column) is the arithmetic mean. The proposed methodologies solved all the instances. 
Finally, from our tests, \OneRot methods appear to be faster than \TwoRot methods in terms of CPU times, according to Table \ref{table:Results}. 
Also, we can report that single solutions for $OP$ with $n<120$ could be generated in less than $60$ seconds.

\begin{table}[!ht]
\scriptsize
\centering
\begin{tabular}{@{}r|ccrrrr@{}}
\toprule
\# & \textbf{Type} & \textbf{Method} & \textbf{Time (s)} & \textbf{Partitions} & \textbf{Solved} & \textbf{Avg. Time (s.ms)} \\ \midrule
\textbf{40} & 4t & (Derived from 39) & 911 & 1756 & 1756 & 00.519 \\ \midrule
\textbf{41} & 4t+1 &  & 807 & 2056 & 2056 & 00.393 \\
\textbf{} &  & \OneRot & 90 &  & 1433 & 00.063 \\
\textbf{} &  & A \TwoRot & 717 &  & 623 & 01.151 \\ \midrule
\textbf{42} & 4t+2 & (Derived from 41) & 90 & 2418 & 2418 & 00.037 \\ \midrule
\textbf{43} & 4t+3 & A \TwoRot & 2462 & 2822 & 2822 & 00.872 \\ \midrule
\textbf{44} & 4t & (Derived from 43) & 2462 & 3302 & 3302 & 00.746 \\ \midrule
\textbf{45} & 4t+1 &  & 3268 & 3851 & 3851 & 00.849 \\
\textbf{} &  & \OneRot & 1406 &  & 2547 & 00.552 \\
\textbf{} &  & A \TwoRot & 1862 &  & 1304 & 01.428 \\ \midrule
\textbf{46} & 4t+2 & (Derived from 45) & 1406 & 4488 & 4488 & 00.313 \\ \midrule
\textbf{47} & 4t+3 & A \TwoRot & 6348 & 5215 & 5215 & 01.217 \\ \midrule
\textbf{48} & 4t & (Derived from 47) & 6348 & 6072 & 6072 & 01.045 \\ \midrule
\textbf{49} & 4t+1 &  & 5587 & 7033 & 7033 & 00.794 \\
\textbf{} &  & \OneRot & 460 &  & 4417 & 00.104 \\
\textbf{} &  & A \TwoRot & 5127 &  & 2616 & 01.960 \\ \midrule
\textbf{50} & 4t+2 & (Derived from 49) & 460 & 8158 & 8158 & 00.056 \\ \midrule
\textbf{51} & 4t+3 & A \TwoRot & 16705 & 9441 & 9441 & 01.769 \\ \midrule
\textbf{52} & 4t & (Derived from 51) & 16705 & 10920 & 10920 & 01.530 \\ \midrule
\textbf{53} & 4t+1 &  & 18998 & 12600 & 12600 & 01.508 \\
\textbf{} &  & \OneRot & 4246 &  & 7513 & 00.565 \\
\textbf{} &  & A \TwoRot & 14752 &  & 5087 & 02.900 \\ \midrule
\textbf{54} & 4t+2 & (Derived from 53) & 4246 & 14552 & 14552 & 00.292 \\ \midrule
\textbf{55} & 4t+3 & A \TwoRot & 57043 & 16753 & 16753 & 03.405 \\ \midrule
\textbf{56} & 4t & (Derived from 55) & 57043 & 19296 & 19296 & 02.956 \\ \midrule
\textbf{57} & 4t+1 &  & 42700 & 22183 & 22183 & 01.925 \\
\textbf{} &  & \OneRot & 2519 &  & 12557 & 00.201 \\
\textbf{} &  & A \TwoRot & 40181 &  & 9626 & 04.174 \\ \midrule
\textbf{58} & 4t+2 & (Derived from 57) & 2519 & 25491 & 25491 & 00.099 \\ \midrule
\textbf{59} & 4t+3 & A \TwoRot & 105258 & 29241 & 29241 & 03.600 \\ \midrule
\textbf{60} & 4t & (Derived from 59) & 105258 & 33552 & 33552 & 03.137 \\ \bottomrule
\end{tabular}
\caption{Computational results for the OP with $n \in [40,60]$, with more than $3$ cycles per instance}
\label{table:Results}
\end{table}
\newpage

\subsection{\emph{IP} proves the absence of solution for \texorpdfstring{$OP(^23,5)$}{OP(3,3,5)}}
\label{sec:335}
The \textit{Handbook of Combinatorial Design} from \cite{Colbourn1996} states that it is well known that $OP(^23,5)$ has no solution, referring to a conference proceeding of \citet{Rosa2003}. In a different work, \citet{Alspach1989} cite an unpublished paper by \citet{Piotrowski1991}. In the latter, the author self-cites an \textit{unpublished} paper \citep{PiotrowskiUnp} from 1979, where he describes a proof made with the \textit{aid of a computer}. We provide a simple proof of non-existence for $OP(^23,5)$ with an $IP$ formulation.
The $OP(^23,5)$ is the problem of arranging $n=11$ people in $2$ tables of $3$ and $1$ table of $5$ for $M=5$ meals. Each person has a label in $\mathbb{Z}_{11}$. The $IP$ formulation enumerates every feasible combination of labels for tables of $3$ (triplets) and tables of $5$ (5-sets). Afterwards, $IP$ seeks to select for $M=5$ meals, one 5-set and two triplets so that each node \textit{is seated} next to every other node exactly once during all the meals. There are $\binom{11}{3}=165$ different triplets, with at least one distinct label, and $\binom{11}{5}\cdot{}12=5544$ 5-sets with different adjacencies. The $IP$ formulation in (\ref{eq:OP335_Objective})-(\ref{eq:OP335_SetDef}) models $OP(^23,5)$. 

\begin{eqnarray}
     min_{F,T}& 0  \label{eq:OP335_Objective}\\
	\text{subject to} & \nonumber \\
	\sum_{i \in I} F_{id} =1 & \text{for all } d \in D, \label{eq:OP335_OneFivePerDay}\\
	\sum_{j \in J} T_{jd} =2 & \text{for all } d \in D, \label{eq:OP335_TwoTripletsPerDay}\\
	\sum_{i \in I} F_{id}\cdot fl_{il} + \sum_{j \in J} T_{jd}\cdot tl_{jl} =1 & \text{for all } d \in D, \text{for all } l \in L, \label{eq:OP335_OneLabelPerDay} \\
	\sum_{d \in D}( \sum_{i \in I} F_{id}\cdot fa_{i\alpha\beta} + \sum_{j \in J} T_{jd}\cdot ta_{i\alpha\beta})=1 & \text{for all } \alpha,\beta \in L \land \alpha \neq \beta , \label{eq:OP335_OneAdjacencyOverDays} \\
	F_{id},T_{jd} \in \{0,1\} & \text{for all } i \in I, j \in J, d \in D. \label{eq:OP335_SetDef}
\end{eqnarray}
The binary variables $F_{id} : i \in I=[1,5544]$ and $T_{jd} : j \in J=[1,165]$ with $d \in D=[1,D=5]$ respectively represent all the different 5-sets and triplets over the $5$ meals, respectively. $F_{id}$ and $T_{jd}$ take value of $1$ if and only if the corresponding element - the $i^{\rm th}$ 5-set or $j-th$ triplet - is selected for the $d-th$ day. Coefficients $fl_{il}$ and $tl_{jl}$ are respectively equal to $1$ if the label $l \in L=[1,11]$ is present in the $i^{\rm th}$ 5-set or $i^{\rm th}$ triplet. Coefficients $fa_{i\alpha \beta}$ and $ta_{j\alpha \beta}$ are equal to $1$ if two different labels $\alpha,\beta \in L$ are adjacent in the $i^{\rm th}$ 5-set or $i^{\rm th}$ triplet. 
The model has no objective function, as of in Equation (\ref{eq:OP335_Objective}). Equations (\ref{eq:OP335_OneFivePerDay}) and (\ref{eq:OP335_TwoTripletsPerDay}) enforce the selection of one 5-set and two triplets per each $d \in D$. Constraint (\ref{eq:OP335_OneLabelPerDay}) enforces that each label appears only once for each $d \in D$. Constraint (\ref{eq:OP335_OneAdjacencyOverDays}) enforces that two labels $\alpha,\beta \in L$ are adjacent only one time over all the meals.
The continuous relaxation of  (\ref{eq:OP335_Objective})-(\ref{eq:OP335_SetDef}) finds no solution in less than a second: hence  $OP(^23,5)$ has no solution. 
Correspondingly, the following proposition holds.

\begin{proposition}

Let $OP(^23,5)$ be the Oberwolfach Problem with $2$ cycles of length $3$ and a cycle of length $5$. There is no solution to $OP(^23,5)$.
\end{proposition}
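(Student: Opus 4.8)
The plan is to recast the existence question for $OP(^23,5)$ as an (in)feasibility question for the integer program (\ref{eq:OP335_Objective})--(\ref{eq:OP335_SetDef}), and then to certify that this program has no solution. Fix the label set $L=\{1,\dots,11\}$. By definition, a solution to $OP(^23,5)$ is a set of five edge-disjoint copies of $F=[^2 3,5]$ whose edges partition $K_{11}$; equivalently, for each of the five meals one must choose one $5$-cycle and two triangles that partition $L$, so that every unordered pair of labels is adjacent in exactly one meal. First I would verify that the variables $F_{id}$ and $T_{jd}$, together with the precomputed incidence coefficients $fl,tl$ (label membership) and $fa,ta$ (adjacency within a chosen cyclic arrangement), faithfully encode this: constraints (\ref{eq:OP335_OneFivePerDay})--(\ref{eq:OP335_TwoTripletsPerDay}) force exactly one $5$-set and two triplets per day, (\ref{eq:OP335_OneLabelPerDay}) forces these three blocks to partition $L$ on each day, and (\ref{eq:OP335_OneAdjacencyOverDays}) forces each pair to be adjacent exactly once over the five days. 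Establishing this bijection between feasible $0/1$ points and solutions to $OP(^23,5)$ is the conceptual heart of the reduction, and it is what licenses replacing a delicate combinatorial case analysis by a finite computation.

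The second step is to certify infeasibility. Since any integer solution is in particular a point of the linear relaxation, it suffices to show that the relaxation of (\ref{eq:OP335_Objective})--(\ref{eq:OP335_SetDef}) has empty feasible region; this is strictly stronger than IP-infeasibility and is exactly what makes the argument short. Concretely I would produce a Farkas certificate: a rational vector of multipliers for the equality constraints (\ref{eq:OP335_OneFivePerDay})--(\ref{eq:OP335_OneAdjacencyOverDays}), combined with nonnegativity of the variables, whose existence proves via Farkas' lemma (LP duality) that the system admits no nonnegative solution. Such a certificate is a finite object that can in principle be checked by hand or in exact arithmetic, independently of any solver.

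The main obstacle is twofold, and both parts concern trustworthiness rather than depth. First, the enumeration of the $5$-sets must capture \emph{all} cyclic orderings of each $5$-subset, since a single $5$-subset of labels supports $4!/2=12$ distinct $5$-cycles with different adjacency patterns; an incomplete enumeration would render the model spuriously infeasible, so I would carefully argue that the count $\binom{11}{5}\cdot 12$ exhausts these arrangements, while the two triangles need no such refinement (a triangle being determined by its vertex set). Second, and harder, is making the infeasibility claim rigorous: a solver returning ``infeasible'' in floating point is not a proof, so the plan is to extract an exact rational Farkas certificate and verify symbolically the single contradictory inequality it yields, thereby turning the computational observation into a checkable mathematical proof that $OP(^23,5)$ has no solution.
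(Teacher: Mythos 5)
Your proposal is correct and takes essentially the same route as the paper: the identical IP formulation (with the same enumeration of $\binom{11}{3}=165$ triplets and $\binom{11}{5}\cdot 12=5544$ cyclically distinct $5$-sets) followed by the observation that infeasibility of the continuous relaxation already certifies non-existence. Your additional insistence on extracting an exact rational Farkas certificate is a strengthening in rigor over the paper, which simply reports that the solver finds the relaxation infeasible in under a second, but it does not change the underlying argument.
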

\section{Concluding remarks}
\label{sec:remarks}
$CP$ -- particularly its propagation and inference algorithms -- exploits difference methods for the $OP$ problem by means of 
well-posed formulations. $1$ and \TwoRot methods reduce the complete $OP$ to the problem of labeling a single $2-factor$
and that problem is efficiently solved in $CP$. In particular, Constraint propagation exploits the relations of mutual exclusion between labels to smartly guide the search procedure. Computational results prove the effectiveness of the approach, which provided complete solutions for the $OP$ for $n \in [40,60]$.
Moreover, theoretical results such as \Prop \ref{prop:1Rotnecessary2} and the proof of absence of a solution for $OP(^23,5)$ 
constitute a further outcome of this work.
The complementarity of Combinatorial Design and Combinatorial Optimization and their positive interaction is, in our view, the main stake pointed out here. The contribution is bidirectional: computational evidence helps to deduce theoretical results, and the latter provides models for the former. We believe that approaches similar to the one presented may help to solve other problems in Combinatorial Design Theory.

\subsection{Solutions and code}
\label{sec:package}
We complement the paper with the software written to implement the presented methodologies. To make its use accessible, we provide a GUI interface.
\\
{\footnotesize
\\ The software is available on GitHub at the following repository:
\\ \url{https://github.com/ALCO-PoliTO/TheOberSolver}
\\Full solutions are available at:
\\ \url{ https://github.com/ALCO-PoliTO/TheOberSolver/tree/master/OberResults}. 
\\The $IP$ \textit{DinnerFor11} formulation of $OP(^23,5)$ is available at 
\\\url{https://github.com/ALCO-PoliTO/DinnerFor11}
}

\section*{Acknowledgements}
This work has been partially supported by ``Ministero dell’Istruzione, dell’ Università e della Ricerca" Award \emph{TESUN-83486178370409} finanziamento dipartimenti di eccellenza CAP. 1694 TIT. 232 ART. 6" .
\noindent

\newpage
\section*{Appendix}
\label{sec:appendix}

\subsection*{BLP Algorithm}
\label{apx:BLPAlgo}
The graph $F=[^{n_1}l_1,^{n_2}l_2,...,^{n_a}l_b]$ is described with $n_i$ the number of cycles of length $l_i$, $T=\sum^{a}_i n_i$ the number of cycles, and $t_{M}=\max_b l$ the longest cycle in $F$. 
The input is an unlabeled $F$ and the output is the $BLP$ solution for $F$, namely $F_l$. The Algorithm iteratively adds to the incumbent a partial labeling for a known pattern, and terminates when all the nodes have been labeled and transferred from $F$ to $F_l$. Lines (3-6) iterate through $T$ cycles, and reduce each cycle  $l_i\ge 7$ with a cycle of length at most of $6$, by labeling patterns of $4$ nodes at a time. Cycles with exactly $4$ nodes - as of in lines (7-9), are labeled instantly. Lines (12-18) search for more complex patterns (e,g: $F=[3,5]$). Lines (19-21) label basic patterns in $F$. The order reported in line (14) is binding, and labeled patterns have different orientations depending on the incumbent labeled nodes. The worst-case time complexity of Algorithm \ref{2RotBLPAlg_Short} is $\mathcal{O}(T\cdot t_M)$ with an implementation without Lists. 

\begin{algorithm}[!htb]
\scriptsize
\begin{algorithmic}[1]
\State \textbf{Input:} The graph $F=[^{n_1}l_1,^{n_2}l_2,...,^{n_a}l_b]=[t_1,t_2,...,t_T]$
\State \textbf{Output:} The labeled (colored) graph $F_l$
\For{$t_i$ in $F$} 
\While{$t_i \ge 7$  } 
		\State{$F_l \leftarrow $ last labels of $t_i$ are $[1100]$;$t_i \leftarrow (t_i-4)$ }\Comment{Color last four elements}
\EndWhile
\If{$t_i$ = 4} 
		\State{$F_l \leftarrow $ labels of $t_i$ are $[1100]$; $t_i \leftarrow (t_i-4)$ } \Comment{Color last four elements}
\EndIf
\EndFor
\State{found=$true$}
\While{$found$}  \Comment{Search for known patterns}
\State{$found$=false}
\Comment{The order of search is strictly as specified}
\If{$F$ contains patterns like $F[3,5]$, $F[^45]$, $F[^83]$, $F[^26]$, $F[^23,6]$ } 
	\State{$found$=true}
	\State{$F_l \leftarrow $ labels of $t$ are [Pattern]; $t_i \leftarrow (t_i-Pattern.length)$}
\EndIf
\EndWhile
\If{the remaining problem $t$ in $T_i$ is of the form of $F[3]$,$F[5,6]$,$F[^53]$,$F[^35]$ } 
	\State{$F_l \leftarrow $ labels of $t$ are [Pattern]; $t_i \leftarrow (t_i-Pattern.length)$}
\EndIf \Comment{Check for basic solutions. The order of search is as specified}
\State{\textbf{return} $F_l$}
\end{algorithmic}
\caption{\textbf{Algorithm 2: \BLP Algorithm }}
\label{2RotBLPAlg_Short}
\end{algorithm}


\subsection*{(Almost) \TwoRot with $n$ even}
\label{apx:2RotNeven}
The approach to this class of instances is similar to the one presented for odd $n$. The \TwoRot method with even $n$ solves instance not fulfilling \Prop \ref{prop:1Rotnecessary1}.
The methodology is analogous to the one for odd $n$, but Condition \ref{2rot:cond3} from \Prop \ref{prop:2rot} is discarded, while Condition \ref{2rot:cond4} holds. If $n$ is even, $F$ has order of $4t+1$ with $n=2t$. The set  of vertices is $V = (\{0,1\} \times G)\ \cup\ \{\infty\}$, with G the additive group $\mathbb{Z}_{2t}$. 
 \BLP and \GLP slightly vary, according to the Proof of \Prop \ref{prop:2rot}.
In particular, according to Condition \ref{2rot:cond4} of \Prop \ref{prop:2rot}, a cycle of $F$ contains a path of the form $P=\lfloor (0,0), (0,n/2), (1,n/2), (1,0)\rfloor$. The modified\BLP takes into account the path $P$ so that the first two nodes of $P$ take the label $0$, and the last two the label $1$. 
We call \textit{critical paths} all the candidates paths in $F$. Difference-sets, represented Equations (\ref{eq:2ROT_Delta00}-\ref{eq:2ROT_Delta01}), are considered over the graph $F-P$, with modulo $4t$, and their  cardinality is lowered to $2t-2$. For ease of notation, the introduced new sub-problems are the \BLPNameEven\;\BLPEven and the \GLPNameEven\;\GLPEven.

\begin{problem}[\BLPNameEven]\label{problem:BinarylabelingEven} Let $F=(V,E)$ be a \emph{2-regular} graph of order $|V|=4t+1$. The \BLPEven\;asks to assign to each node $i \in \Bar{V}=V\backslash\{\infty\}$ a binary label $c_i \in \{0,1\}$ so that the following conditions hold:
\begin{enumerate}
	\item $\lfloor \alpha,\infty \rfloor \land \lfloor \beta,\infty \rfloor \Longrightarrow c_{\alpha} \neq c_{\beta}$,\label{BinaryProbEven:cond1}
	\item There is at least one critical path $P$ so that \\ 
	$P=\lfloor x,y,z,k \rfloor \;|\; x,y,z,k \in \Bar{V} \land c_x=c_y=0, c_z=c_k=1$,  \label{BinaryProbEven:cond3}
	\item $| \Delta (F-P)_{01} = \{ \alpha-\beta \;:\; \e{(0,\alpha)}{(1,\beta)} \; \forall\; \alpha,\beta \in G\}|=2t-2$.
		\label{BinaryProbEven:cond2}
\end{enumerate}
\end{problem}

The $CP$ model (\ref{eq:2ROT_CSet}-\ref{eq:2ROT_ColoringInfinite}) is modified to fit the additional Condition (\ref{BinaryProbEven:cond3}) for the \BLPEven. Constraints (\ref{eq:2ROT_ColoringCount}-\ref{eq:2ROT_ColoringNodesCount2}) are modified to enforce the new cardinality $(2t-1)$ for both $dM$ and $C$. Moreover, the following additional Constraints hold:

\begin{eqnarray}
A=\{A_i\;|\; A_i\in \{0,1\} \} \quad &\text{for all } i \in PA, \label{eq:2ROT_PathSet} \\
  A_i=\begin{cases}
    1 & \text{if $c_x=c_y=0 \land c_z=c_k=1$ \quad} \\
    0 & \text{otherwise}.
  \end{cases} & PA_i=\lfloor x,y,z,k \rfloor, & \quad \label{eq:2ROT_PathSetDef} \\
& \texttt{card($A_i|1$)} \ge1. & \quad \label{eq:2ROT_PathSetCard}	
\end{eqnarray}

The set $PA$ in (\ref{eq:2ROT_PathSet}) enumerates all combinations of four consecutive nodes in $F$. In Constraints (\ref{eq:2ROT_PathSet}) and (\ref{eq:2ROT_PathSetDef}), each element $A_i \in A$ is set to $1$ if $c_x=c_y=0 \land c_z=c_k=1$, and hence $A_i$ is a critical path. At least one critical path exists as of (\ref{eq:2ROT_PathSetCard}). Once \BLPEven is solved, the \GLPEven labels are determined considering a single critical path $A_i$. If no solution is found for the latter sub-problem, a new critical path $A_j \neq A_i$ induces a different \GLPEven. In terms of \GLPEven, Condition \ref{2rot:cond4.1} induces four specific group-labels on the critical path $A_i$. 

\begin{problem}[\GLPNameEven]\label{problem:GLPEven} Let $F=(V,E)$ be a \emph{2-regular} graph of order $|V(F)|=4t+1$. $V(F) = \{\{0,1\} \times G \}\cup \{ \infty \}$ is the set of nodes over $F$, where $G$ is the additive group $\mathbb{Z}_{2t}$. For each node $i \in \Bar{V}=V\backslash\{\infty\}$ - given the binary label $c_i \in \{0,1\}$ of $V$ and a critical path $P$, the \GLPEven\;asks to assign a label $n_i \in G$ so that the following conditions hold:
\begin{enumerate}
	\item Difference sets are so that \\
	$\Delta_{00}(F-P)=\Delta_{11}(F-P)=\Delta_{01}(F-P)=G\backslash\{0, t \}$, \label{GLPEven:cond1}
	\item  F contains the path $P=\lfloor x,y,z,k \rfloor=\lfloor (0,0), (0,t), (1,t), (1,0)\rfloor$. \label{GLPEven:cond2}
\end{enumerate}
\end{problem}
 
The \GLPEven $CP$ formulation is similar to the one in (\ref{eq:2ROT_labelingMasterSet}-\ref{eq:2ROT_labelingAlldifferentDiffAB}), and the critical-path $P=A_i$ constitutes an additional input.

\begin{eqnarray}
& V =\{A\cup B \}, \quad & \label{eq:2ROTE_labelingMasterSet}   \\
 A=\{a_i \;|\; a_i \in G,c_i=0\} \quad &  B=\{b_i\;|\;b_i \in G,c_i=1\},\label{eq:2ROT_labelingColorSets2} \\
\texttt{alldifferent(A)} \quad &  dom(A)=[0,2t), \label{eq:2ROTE_labelingAlldifferentA}	 \\
\texttt{alldifferent(B)} \quad &  dom(B)=[0,2t), \label{eq:2ROTE_labelingAlldifferentB}	\\
dA=\{(a_{\alpha} - a_{\beta} \mod (2t)) \} \quad &  \text{for all } \alpha,\beta \in A, \lfloor \alpha,\beta \rfloor \notin P, \label{eq:2ROTE_labelingSetA} \\
dB=\{(b_{\alpha} - b_{\beta}  \mod (2t)) \} \quad &  \text{for all } \alpha,\beta \in B, \lfloor \alpha,\beta \rfloor \notin P, \label{eq:2ROTE_labelingSetB} \\
dAB=\{(a_{\alpha} - b_{\beta} \mod (2t)) \} \quad &  \text{for all } \alpha \in A, \beta \in B, \lfloor \alpha,\beta \rfloor \notin P, \label{eq:2ROTE_labelingSetAB} \\
\texttt{alldifferent(dA)} \quad &  dom(dA)=(0,2t) \backslash\{t\}, \label{eq:2ROTE_labelingAlldifferentDiffA}	\\
\texttt{alldifferent(dB)} \quad &  dom(dB)=(0,2t) \backslash\{t\}, \quad \label{eq:2ROTE_labelingAlldifferentDiffB}	\\
\texttt{alldifferent(dAB)} \quad & dom(dAB)=(0,2t) \backslash\{t\}, \label{eq:2ROTE_labelingAlldifferentDiffAB}	\\
n_x=n_k=0,n_y=n_z=t  \quad & \ P=\lfloor x,y,z,k \rfloor, \label{eq:2ROTE_CriticalPath}\\
\texttt{card(dA)}=\texttt{card(dB)}=\texttt{card(dAB)} & \texttt{card(dAB)}=2t-2. \label{eq:2ROTE_labelingCardinalityDifferences}	
\end{eqnarray}

Constraints (\ref{eq:2ROT_labelingColorSets2}-\ref{eq:2ROTE_labelingSetAB} and \ref{eq:2ROTE_labelingCardinalityDifferences}) have different cardinalities and modulo arguments in. The value $n/2=t$ is not in difference-sets of Constraints (\ref{eq:2ROTE_labelingSetA}-\ref{eq:2ROTE_labelingSetAB}). Moreover, edges inside $P$ do not figure in difference-sets. Condition \ref{GLPEven:cond2} of \GLPEven assigns labels for nodes inside the critical path $P$, as  in Constraint (\ref{eq:2ROTE_CriticalPath}).

\bibliographystyle{abbrvnat}
\bibliography{Biblio.bib}
\label{sec:bib}

\end{document}